\colorlet{darkishRed}{red!80!black}
\colorlet{darkishBlue}{blue!60!black}
\colorlet{darkishGreen}{green!60!black}
\renewcommand{\PrintDOI}[1]{\doi{#1}}
\let\setminus=\smallsetminus
\let\setminus=\smallsetminus
\newcommand{\rest}{\upharpoonright}
\DeclareMathOperator{\medcup}{\mathsmaller{\bigcup}}
\DeclareMathOperator{\medcap}{\mathsmaller{\bigcap}}
\newcommand{\lives}{\sqsubseteq}
\renewcommand{\subset}{\subseteq}
\renewcommand{\supset}{\supseteq}
\newcommand{\at}{attached to }
\newcommand{ \N } { \mathbb{N} }
\def\calCommandfactory#1{%
   \expandafter\def\csname c#1\endcsname{\mathcal{#1}}}
\def\frakCommandfactory#1{%
   \expandafter\def\csname frak#1\endcsname{\mathfrak{#1}}}
\newcounter{ctr}
  \edef\X{\@Alph\c@ctr}
  \edef\Y{\@alph\c@ctr}
\newtheorem{theorem}{Theorem}[section] 
\newtheorem{corollary}[theorem]{Corollary}
\newtheorem{lemma}[theorem]{Lemma}
\newtheorem{mainresult}{Theorem}
\newenvironment{customthm}[1]
  {\innercustomthm}
  {\endinnercustomthm}
\theoremstyle{definition}
\newtheorem{example}[theorem]{Example}
\theoremstyle{remark}
\newcommand{\abs}[1]{\lvert#1\rvert}
\crefname{property}{property}{properties}
\newcommand{\cutsep}{cut-sep\-a\-ra\-tion}
\newcommand{\bondsep}{bond-sep\-a\-ra\-tion}
\newcommand{\tcd}{tree-cut de\-com\-po\-si\-tion}
\begin{document}
\vspace*{-1.14cm} 
\title[Edge-connectivity and tree-structure in finite and infinite graphs]{Edge-connectivity and tree-structure\\in finite and infinite graphs}

\author{Christian Elbracht}
\address{Universität Hamburg, Department of Mathematics, Bundesstraße 55 (Geomatikum), 20146 Hamburg, Germany}
\email{$\{$christian.elbracht, jan.kurkofka, maximilian.teegen$\}$@uni-hamburg.de}

\author{Jan Kurkofka}

\author{Maximilian Teegen}

\keywords{edge-connected; tree; edge-block; inseparable; distinguish efficiently; infinite}
\@namedef{subjclassname@2020}{\textup{2020} Mathematics Subject Classification}
\subjclass[2020]{05C40, 05C05, 05C69, 05C70, 05C83, 05C63}

\begin{abstract}
We show that every graph admits a canonical tree-like decomposition into its $k$-edge-connected pieces for all $k\in\N\cup\{\infty\}$ simultaneously.
\end{abstract}

\maketitle

\vspace{-.7cm}
\section{Introduction}

\noindent Finding a tree-like decomposition of any finite graph into its `$k$-vertex-connected pieces', for just one given $k\in\N$ or all $k\in\N$ at once, has been a longstanding quest in graph theory until recently, when it was solved comprehensively by Diestel, Hundertmark and Lemanczyk~\cite{ProfilesNew}. 
One of the complications was that there are many competing notions of what a `$k$-vertex-connected piece' of a graph should be.
Instead of providing a dozen independent solutions for the dozen different notions of `$k$-vertex-connected pieces' that are in use, the solution in~\cite{ProfilesNew} deals with all these notions at once.
Related results can be found in~\cites{short,kBlocks,CarmesinOne,CarmesinTwo,ConnectivityTreeStructure,StructuralTT,carmesin2020canonical,BlockTangleDuality,TTDAbstract,TTDgraphs,MonaLisa,ProfilesNew,VertexCuts,elbracht2020canonical,FiniteSplinters,InfiniteSplinters,elm2020treeoftangles,Refining,Matroids,GeelenGridMatroid,Grohe,Valentin,GMX,Tutte}.

If we consider edge-connectivity instead of vertex-connectivity, however, there does exist a single notion of `$k$-edge-connected pieces' that undeniably is the most natural one.
Let $k\in\N\cup\{\infty\}$ and let $G$ be any connected graph, possibly infinite. 
We say that two vertices or ends are (${<}k$)-\emph{inseparable} in $G$ if they cannot be separated in $G$ by fewer than $k$ edges.
This defines an equivalence relation on $\hat{V}(G):=V(G)\cup\Omega(G)$ where $\Omega(G)$ denotes the set of ends of~$G$ (which is empty if $G$ is finite).
Its equivalence classes are the `$k$-edge-connected pieces' of $G$, its \emph{$k$-edge-blocks}.
A subset of $\hat{V}(G)$ is an \emph{edge-block} if it is a $k$-edge-block for some~$k$.
Note that any two edge-blocks are either disjoint or one contains the other.
In this paper we find a canonical tree-like decomposition of any connected graph, finite or infinite, into its $k$-edge-blocks---for all $k\in\N\cup\{\infty\}$ simultaneously. 
To state our result, we only need a few intuitive definitions.

A subset $X\subset\hat{V}(G)$ \emph{lives in} a subgraph $C\subset G$ or vertex set $C\subset V(G)$ if all the vertices of $X$ lie in $C$ and all the rays of ends in $X$ have tails in $C$ or~$G[C]$, respectively.
If $G$ is finite, saying that $X$ lives in $C$ simply means that~$X\subset C$.
An edge set $F\subset E(G)$ \emph{distinguishes} two edge-blocks of $G$, not necessarily $k$-edge-blocks for the same~$k$, if they live in distinct components of $G-F$.
It distinguishes them \emph{efficiently} if they are not distinguished by any edge set of smaller size.
Note that if $F$ distinguishes two edge-blocks efficiently, then $F$ must be a \emph{bond}, a cut with connected sides.
A set $B$ of bonds \emph{distinguishes} some set of edge-blocks of~$G$ \emph{efficiently} if every two disjoint edge-blocks in this set are distinguished efficiently by a bond in~$B$.
Two cuts $F_1,F_2$ of $G$ are \emph{nested} if $F_1$ has a side $V_1$ and $F_2$ has a side $V_2$ such that $V_1\subset V_2$. Note that this is symmetric.
The fundamental cuts of a spanning tree, for example, are (pairwise) nested.
Our main result reads as follows:
\begin{mainresult}\label{thm:main}
Every connected graph $G$ has a nested set  
of bonds that efficiently distinguishes all the edge-blocks of~$G$. 
\end{mainresult}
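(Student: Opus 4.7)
The plan is to build a nested set of efficient distinguishing bonds by a classical uncrossing argument, carried out pairwise and then combined globally via Zorn's lemma. Pairwise existence is immediate: any two disjoint edge-blocks $A$ and $B$ are distinct $k$-edge-blocks for some finite $k$ and hence can be separated by a finite cut, and any minimum separating cut has both sides connected (otherwise a smaller cut would already separate them) and is therefore a bond. In the infinite setting, each end lies in exactly one side of every finite cut, so the ``lives in'' relation is canonical and each edge-block sits on a definite side of every bond that does not split it.

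The key technical tool is the submodular inequality for the cut function, $d(S)+d(T)\ge d(S\cap T)+d(S\cup T)$. Given two crossing efficient bonds $F_1=E(S,\bar S)$ and $F_2=E(T,\bar T)$ distinguishing pairs $(A_1,B_1)$ and $(A_2,B_2)$ respectively, a case analysis over which of the four corners $S\cap T$, $S\setminus T$, $T\setminus S$, $\overline{S\cup T}$ each of the four blocks lives in shows that one of the two corner cuts $E(S\cap T,\overline{S\cap T})$ and $E(S\cup T,\overline{S\cup T})$ still distinguishes $(A_1,B_1)$ and is nested with $F_2$, and symmetrically for $F_2$. Submodularity then guarantees that the corner cuts are no larger than the originals, so efficiency survives; passing from a corner cut to its minimal sub-bond (the edges between $A_1$'s component in the appropriate side and the rest) preserves both nestedness and the efficient distinguishing property.

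To globalise, I would apply Zorn's lemma to the family of pairwise nested sets of efficient distinguishing bonds ordered by inclusion, obtaining a maximal element $\cN$. If some pair of disjoint edge-blocks were still undistinguished by $\cN$, I would take any efficient witness bond and uncross it successively against each member of $\cN$ with which it crosses, maintaining throughout the invariant that the evolving bond efficiently distinguishes the target pair. The main obstacle I anticipate is ensuring that this (possibly transfinite) uncrossing procedure converges to a single bond nested with \emph{all} of $\cN$: each uncrossing step may introduce new crossings with previously nested members, and controlling this while simultaneously tracking the infinitely many ends and the finer and coarser edge-blocks lying on each side of each bond is the delicate step that will require the bulk of the careful work.
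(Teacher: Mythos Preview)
Your uncrossing-via-submodularity idea is the right local engine and is exactly what the paper uses. But the global packaging via Zorn's lemma plus successive uncrossing has two genuine gaps.

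First, your corner claim is not correct as stated: you assert that one of $E(S\cap T,\overline{S\cap T})$ and $E(S\cup T,\overline{S\cup T})$ always still distinguishes $(A_1,B_1)$, but if $A_1$ lives in $S\setminus T$ and $B_1$ in $T\setminus S$ then neither does---the other pair of corners is needed, together with the other submodular inequality. More seriously, when you try to uncross a witness $F$ of order~$k$ against a member $F'\in\cN$ of strictly larger order, there need not be any corner of order~$k$ that still distinguishes your target pair: submodularity only bounds the \emph{sum} of two opposite corner orders by $|F|+|F'|$, so both could exceed~$|F|$. This breaks the single-witness uncrossing strategy. (The paper's Lemma~\ref{lem:splinter_2} handles unequal orders by replacing the \emph{larger} bond with a corner, which is useless if the larger bond is the one already in~$\cN$.)

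Second, on termination: your worry that an uncrossing step ``may introduce new crossings with previously nested members'' is actually unfounded---Lemma~\ref{lem:fish} shows that any corner of $F$ and $F'$ is nested with everything that was nested with both, hence with every member of~$\cN$ that was already nested with~$F$. The real obstacle is that $F$ may cross infinitely many members of~$\cN$ to begin with, and you give no mechanism for convergence. The key combinatorial fact you are missing is that every edge lies in only finitely many bonds of any fixed size (Lemma~\ref{lem:finitely_many_bonds}), whence every finite-order bond is crossed by only finitely many bond-separations of order at most~$k$, for each~$k$ (Lemma~\ref{lem:splinter_1}). This finiteness is precisely what makes the procedure terminate. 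Rather than run an uncrossing by hand against a Zorn-maximal set, the paper verifies three ``thinly splinters'' properties (Lemmas~\ref{lem:splinter_1}, \ref{lem:splinter_2}, \ref{lem:splinter_3}) and invokes a short black-box theorem (Theorem~\ref{thm:thinly}) that outputs the nested transversal directly; that framework only ever replaces a bond by a corner against bonds of the same or smaller order, which sidesteps the first problem as well.
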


\noindent The nested sets $N=N(G)$ that we construct, one for every~$G$, have two strong additional properties:
\begin{enumerate}[label=(\roman*)]
    \item They are canonical in that they are invariant under isomorphisms: if $\phi\colon G\to G'$ is a graph-isomorphism, then $\phi(N(G))=N(\phi(G))$.
    \smallskip
    \item For every $k\in\N$, the subset $N_k\subset N$ formed by the bonds of size less than $k$ is equal to the set of fundamental cuts of a tree-cut decomposition of~$G$ that decomposes $G$ into its $k$-edge-blocks.
    \smallskip
\end{enumerate}

\noindent \emph{Tree-cut decompositions} are decompositions of graphs similar to tree-decompositions but based on edge-cuts rather than vertex-separators.
They were introduced by Wollan~\cite{Wollan}, and they are more general than the `tree-partitions' introduced by Seese~\cite{Seese} and by Halin~\cite{HalinTreePartition}; see Section~\ref{sec:TreePartition}.

The second additional property above is best possible in the sense that $N_k$ cannot be replaced with~$N$:
there exists a graph $G$ (see Example~\ref{example:BigPicture}) that has no nested set of cuts which, on the one hand, distinguishes all the edge-blocks of~$G$ efficiently, and on the other hand, is the set of fundamental cuts of some tree-cut decomposition.
(This is because the `tree-structure' defined by a nested set of cuts may have limit points, and hence not be representable by a graph-theoretical tree.)

It turns out that the nested sets of bonds which make Theorem~\ref{thm:main} true can be characterised in terms of generating bonds (for the definition of \emph{generate} see Section~\ref{sec:DD}):

\begin{mainresult}\label{thm:generate}
Let $G$ be any connected graph and let $M$ be any nested set of bonds of~$G$. 
Then the following assertions are equivalent:
\begin{enumerate}
    \item $M$ efficiently distinguishes all the edge-blocks of~$G$;
    \item For every $k\in\N$, the ${\le} k$-sized bonds in~$M$ generate all the $k$-sized cuts of~$G$.
\end{enumerate}
\end{mainresult}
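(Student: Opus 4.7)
The plan is to prove the two implications separately, both leveraging the structure developed alongside Theorem~\ref{thm:main}.

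For $(2) \Rightarrow (1)$: let $b_1, b_2$ be disjoint edge-blocks, and let $B$ be any bond efficiently distinguishing them, with $|B| = \ell$. First, $\ell < k_i$ whenever $b_i$ is a $k_i$-edge-block: indeed, $b_1, b_2$ lie in a common $\ell$-edge-block but in distinct $(\ell+1)$-edge-blocks, so any $k$ for which $b_i$ is a $k$-edge-block satisfies $k > \ell$. Apply (2) with $k = \ell$ to write $B$ as an $\mathbb{F}_2$-sum $B_1 + \cdots + B_m$ of bonds of $M$ of size $\le \ell$. Pick vertices $v_i \in b_i$ and any $v_1$-$v_2$-path $P$ in $G$. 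Since $B$ separates $v_1$ from $v_2$, the number $|P \cap B|$ is odd, so some $|P \cap B_j|$ is odd too, and hence $B_j$ separates $v_1$ from $v_2$. Because $|B_j| \le \ell < k_i$, the bond $B_j$ cannot split either $b_i$, so $b_1$ and $b_2$ lie on opposite sides of $B_j$. Efficiency of $B$ then forces $|B_j| = \ell$, yielding the desired efficient bond in $M$.

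For $(1) \Rightarrow (2)$: fix $k \in \mathbb{N}$. Every cut of size $k$ is a disjoint union of finitely many bonds of size $\le k$, so it suffices to generate every bond $B$ of size $\ell \le k$. The key step is to establish the analog of property (ii) after Theorem~\ref{thm:main} for an arbitrary nested $M$ satisfying (1): namely, that $M_{\le k}$ is the set of fundamental cuts of a tree-cut decomposition $(T, (V_t)_{t \in V(T)})$ of $G$ whose non-empty parts are precisely the $(k+1)$-edge-blocks of $G$. Granting this, since $|B| = \ell \le k$, no $(k+1)$-edge-block can be split by $B$, so each $V_t$ lies entirely on one side of $B$; this induces a bipartition $V(T) = A \sqcup A^c$. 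A standard edge-by-edge count along tree-paths (each edge $uv \in E(G)$ lies in $F_e$ iff $e$ is on the $T$-path between the parts containing $u$ and $v$) then gives the identity $B = \sum_{e \in E_T(A, A^c)} F_e$, where $F_e$ is the fundamental cut of $e$, expressing $B$ as an $\mathbb{F}_2$-sum of bonds in $M_{\le k}$.

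The main obstacle is twofold. First, establishing the tree-cut decomposition lemma for any $M$ satisfying (1) — this requires extracting the tree structure from nestedness together with efficient distinguishing, rather than from the specific construction used for Theorem~\ref{thm:main}. Second, all of this must be carried out in the possibly infinite setting: the tree $T$ may be infinite and carry ends, so the sum $\sum_e F_e$ may be infinite and the word \emph{generate} has to be understood via the thin-sum notion from Section~\ref{sec:DD}; moreover the empty parts $V_t$ must be assigned to $A$ or $A^c$ consistently with $B$ so that the identity above still holds. Once this bookkeeping is in place, the proof reduces to the two short combinatorial cores sketched above.
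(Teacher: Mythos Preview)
Your proposal rests on a misreading of the definition of \emph{generate} in Section~\ref{sec:DD}. The paper does not use $\mathbb{F}_2$-sums, let alone infinite thin sums: a set $S$ of cut-separations generates $\{X,Y\}$ when there is a \emph{finite} subset $\{\{A_\ell,B_\ell\}\mid\ell<n\}\subset S$ with $\{X,Y\}=\{\bigcup_\ell A_\ell,\bigcap_\ell B_\ell\}$, equivalently when $(X,Y)$ is a finite $\vee/\wedge$ combination of oriented elements of~$S$. Your path-parity argument for $(2)\Rightarrow(1)$ therefore works from the wrong hypothesis; with the correct one the implication is one line (if each $\{A_\ell,B_\ell\}$ fails to separate $\beta_1$ from $\beta_2$ then so does $\{\bigcup_\ell A_\ell,\bigcap_\ell B_\ell\}$), and no parity is needed.

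The more serious gap is in $(1)\Rightarrow(2)$. First, the ``tree-cut decomposition lemma'' you list as a main obstacle is already Theorem~\ref{thm:generalTreePartition}, which holds for \emph{any} nested set of bonds of bounded order and does not require~(1). Second, and crucially, because `generate' demands a finite witness, the real work is exactly what you dismiss as bookkeeping: showing that a given finite bond $B=E(V_1,V_2)$ can be expressed using only \emph{finitely many} fundamental cuts. Your sketch produces a possibly infinite expression over all $T_1$--$T_2$ edges and appeals to a thin-sum notion that Section~\ref{sec:DD} never introduces. The paper secures finiteness in two steps: it first replaces $T$ by a rayless subtree $H'$ (the finite subtree $H$ spanned by the parts meeting endvertices of~$B$, together with all neighbouring nodes) and verifies that edge-blocks separated by $B$ are still separated by fundamental cuts of~$H'$; then, after colouring nodes by the side of $B$ their extended parts (including ends) live in, it contracts monochromatic components and uses the generalised star-comb lemma to prove the resulting tree is locally finite, hence finite by K\H{o}nig's lemma. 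Only then does the finite $\wedge/\vee$ expression for $(V_1,V_2)$ follow. None of this is present in your outline.
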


\noindent 
Nested sets of bonds which are canonical and satisfy assertion~(ii) of Theorem~\ref{thm:generate} have been constructed by Dicks and Dunwoody using their algebraic theory of graph symmetries.
This is one of the main results of their monograph~\cite{DD}*{II 2.20f}.
Since the implication (ii)$\to$(i) of Theorem~\ref{thm:generate} is straightforward, Theorem~\ref{thm:main} can be deduced from their theory, but it is not stated in~\cite{DD} explicitly.
Our Theorem~\ref{thm:generate} itself, in particular its highly non-trivial forward implication (i)$\to$(ii), does not follow from material in~\cite{DD}.
Since our proofs are purely combinatorial, we can combine Theorem~\ref{thm:main} and the forward implication (i)$\to$(ii) of Theorem~\ref{thm:generate} to obtain a purely combinatorial proof of the main result of Dicks and Dunwoody.
Together, our proofs of Theorem~\ref{thm:main} and Theorem~\ref{thm:generate} take just over~7 pages in total.

This paper is organised as follows.
In Section~\ref{sec:Prelim} we introduce the tools and terminology that we need.
In Section~\ref{sec:MainProof} we prove our main result, Theorem~\ref{thm:main}, and we show that we obtain a canonical set~$N$.
In Section~\ref{sec:TreePartition} we relate each $N_k$ to a tree-cut decomposition.
In Section~\ref{sec:DD} we prove Theorem~\ref{thm:generate}.
In Section~\ref{sec:infEdgeBlocks} we recall a theorem about spanning trees that distinguish all $\infty$-edge-blocks.

\section{Tools and terminology}\label{sec:Prelim}

\noindent We use the graph-theoretic notation of Diestel's book~\cite{DiestelBook5}.
Throughout this paper, $G=(V,E)$ denotes any connected graph, finite or infinite.
When we say ends we mean vertex-ends as usual, not edge-ends.
If a subset $X\subset\hat{V}(G)$, usually an edge-block, lives in a subgraph $C\subset G$ or vertex set $C\subset V(G)$, we denote this by $X\lives C$ for short.
Recall that $X\lives C$ defaults to $X\subset C$ if $G$ is finite.

The following lemma is well known \cite{DiestelBook5}*{Exercise~8.12}; we provide a proof for the reader's convenience.

\begin{lemma}\label{lem:finitely_many_bonds}
Every edge of a graph lies in only finitely many bonds of size $k$ of that graph, for any $k\in\N$.
\end{lemma}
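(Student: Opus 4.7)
The plan is to induct on $k$. The base case $k = 1$ is immediate: the only candidate for a bond of size one containing $e$ is $\{e\}$ itself, so at most one such bond exists.

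For the inductive step, fix $k \ge 2$ and assume the lemma for $k - 1$ in every graph. Let $e = uv$. The crucial observation is that the map $F \mapsto F \setminus \{e\}$ is a bijection between bonds of size $k$ in $G$ containing $e$ and bonds of size $k - 1$ in $G - e$ separating $u$ from $v$. Indeed, if $F$ is such a bond in $G$ with sides $A \ni u$ and $B \ni v$, then $e$ has one endpoint in each side, so $(G-e)[A] = G[A]$ and $(G-e)[B] = G[B]$ remain connected in $G - e$; hence $F \setminus \{e\}$ equals the edge set between $A$ and $B$ in $G - e$ and is a bond of size $k - 1$ there separating $u$ from $v$. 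The inverse map simply adds $e$ back, and the same connectivity check works.

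To conclude, I distinguish two cases. If $e$ is a bridge of $G$, then the components of $G - e$ containing $u$ and $v$ are distinct; any bond of $G - e$ with sides separating $u$ from $v$ would have each connected side entirely within one of these components, making the cut empty, so no such bond of size $k - 1 \ge 1$ exists. Otherwise $G - e$ is connected, and I fix any $u$-$v$ path $P$ in $G - e$. Every bond of $G - e$ separating $u$ from $v$ must contain at least one edge of the finite edge set $E(P)$. By the inductive hypothesis applied in $G - e$, each edge of $P$ lies in only finitely many bonds of size $k - 1$ of $G - e$; summing over $E(P)$ yields a finite total, and the bijection transports this finiteness back to $G$. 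The main step is identifying this reduction bijection; afterwards, induction plus pigeonhole on a fixed $u$-$v$ path does the rest, so no serious obstacle arises.
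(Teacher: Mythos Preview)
Your proof is correct, but it takes a different route from the paper's. The paper argues directly by contradiction without induction: assuming infinitely many size-$k$ bonds $B_0,B_1,\ldots$ contain $e$, it picks an inclusion-maximal edge set $F$ contained in infinitely many of the $B_n$, observes that $|F|<k$ (since the $B_n$ are distinct), and uses the fact that a proper subset of a bond does not disconnect $G$ to obtain a $u$--$v$ path $P$ in $G-F$; then every $B_n$ must meet $P$, yet maximality of $F$ forces each edge of $P$ to lie in only finitely many $B_n$. Your argument instead inducts on $k$ via the clean bijection $F\mapsto F\setminus\{e\}$ between size-$k$ bonds of $G$ through $e$ and size-$(k-1)$ bonds of $G-e$ separating $u$ from $v$, and then pigeonholes on a fixed $u$--$v$ path in $G-e$. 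Both proofs ultimately hinge on the same path-plus-pigeonhole idea; the paper's extremal choice of $F$ compresses the induction into a single step, while your bijection makes the underlying structural reduction (from $G$ to $G-e$ and from $k$ to $k-1$) explicit and reusable.
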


\begin{proof}
Let $e$ be any edge of a graph $G$, and suppose for a contradiction that $e$ lies in infinitely many distinct bonds $B_0,B_1,\ldots$ of size~$k$, say.
Let $F$ be an inclusionwise maximal set of edges of $G$ such that $F$ is included in $B_n$ for infinitely many~$n$ (all~$n$, without loss of generality).
Then $\vert F\vert <k$ because the bonds are distinct, and any bond $B_n\supsetneq F$ gives rise to a path $P$ in $G-F$ that links the endvertices of~$e$.
Now all the infinitely many bonds $B_n$ must contain an edge of the finite path~$P$.
But by the choice of~$F$, each edge of $P$ lies in only finitely many $B_n$, a contradiction.
\end{proof}

\begin{corollary}\label{cor:noOmegaOrPlusOne}
Let $G$ be any connected graph, $k\in\N$, and let $F_0,F_1,\ldots$ be infinitely many distinct bonds of $G$ of size at most $k$ such that each bond $F_n$ has a side $A_n$ with $A_n\subsetneq A_m$ for all $n<m$.
Then $\bigcup_{n\in\N} A_n=V$.
\end{corollary}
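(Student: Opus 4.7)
The plan is to argue by contradiction: assume that $A := \bigcup_{n\in\N} A_n$ is a proper subset of $V$, pick any $v\in V\setminus A$ and any $u\in A_0$, and use the connectivity of~$G$ to find a finite $u$--$v$ path~$P$. For every $n$ we have $u\in A_0\subset A_n$, whereas $v\notin A$ implies $v\notin A_n$; since $A_n$ is one side of the bond $F_n$, the path $P$ must cross $F_n$, i.e., contain at least one edge of~$F_n$.

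The contradiction comes from a pigeonhole argument combined with Lemma~\ref{lem:finitely_many_bonds}. The path $P$ has only finitely many edges, and by Lemma~\ref{lem:finitely_many_bonds} each edge of $G$ lies in only finitely many bonds of any fixed size $j\in\{0,1,\ldots,k\}$. Summing over the finitely many possible sizes, each edge of $P$ lies in only finitely many bonds of size at most~$k$. Hence only finitely many of the bonds $F_n$ can contain an edge of~$P$, contradicting the fact that all of the infinitely many $F_n$ must cross~$P$.

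I do not expect any real obstacle here; the argument is essentially forced once one sets it up. The only point that needs a moment of care is verifying that the path $P$ truly crosses every $F_n$, which in turn amounts to observing that $v\notin A_n$ for all~$n$ (immediate from $v\notin\bigcup_n A_n$) and that $u\in A_n$ for all~$n$ (immediate from $A_0\subset A_n$, which follows from the strictly increasing chain hypothesis $A_n\subsetneq A_m$ for $n<m$). Once these are in place, the application of Lemma~\ref{lem:finitely_many_bonds} closes the argument in a single line.
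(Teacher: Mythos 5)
Your proof is correct and follows exactly the same route as the paper's: pick a path from $A_0$ to $V\setminus\bigcup_n A_n$, observe that every $F_n$ must meet it, and derive a contradiction with Lemma~\ref{lem:finitely_many_bonds} via pigeonhole on the finitely many edges of the path. The only difference is that you spell out the pigeonhole step and the summation over sizes $0,\ldots,k$, which the paper leaves implicit.
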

\begin{proof}
If $\bigcup_n A_n$ is a proper subset of $V$, then any $A_0$--$(V\setminus\bigcup_n A_n)$ path in~$G$ admits an edge that lies in infinitely many $F_n$, contradicting Lemma~\ref{lem:finitely_many_bonds}.
\end{proof}

\subsection{Cuts, bonds and separations}

The \emph{order} of a cut is its size.
A \emph{\cutsep } of a graph $G$ is a bipartition $\{A,B\}$ of the vertex set of~$G$, and it \emph{induces} the cut $E(A,B)$.
Then the order of the cut $E(A,B)$ is also the \emph{order} of $\{A,B\}$.
Recall that in a connected graph, every cut is induced by a unique \cutsep\ in this way, to which it \emph{corresponds}.
A \emph{\bondsep } of $G$ is a \cutsep\ that induces a bond of~$G$, a cut with connected sides.
We say that a \cutsep\ \emph{distinguishes} two edge-blocks (\emph{efficiently}) if its corresponding cut does, and we call two \cutsep s \emph{nested} if their corresponding cuts are nested.
Thus, two \cutsep s $\{A,B\}$ and $\{C,D\}$ are nested if one of the four inclusions $A\subset C$, $A\subset D$, $B\subset C$ or $B\subset D$ holds.

\subsection{Key tool}\label{sec:keytool}

The proof of our main result relies on a result from~\cite{InfiniteSplinters}.
To state it, we shall need the following definitions.
Let $ \cA $ be some set and $ \sim $ a reflexive and symmetric binary relation on~$ \cA $. We say that two elements $ a $ and $ b $ of $ \cA $ are \emph{nested} if~$ a\sim b $ and two elements of $ \cA $ which are not nested \emph{cross}. A subset of $ \cA $ is called nested if its elements are pairwise nested.
In our setting, $\cA$ will be the set of all the \bondsep s of a connected graph $G$ that efficiently distinguish some edge-blocks of~$G$, and ${\sim}$ will encode `being nested' for \bondsep s.

Given $ a,b\in \cA$, we call $ c\in\cA $ a \emph{corner} of $ a $ and $ b $ if every element of~$ \cA $ which is nested with both $a$ and~$b$ is also nested with $c$.
When $a=\{A,B\}$ and $b=\{C,D\}$ are two \bondsep s, then $c$ will usually be one of the following four possible corners:
either $\{A\cap C,B\cup D\}$, $\{A\cap D,B\cup C\}$, $\{B\cap D,A\cup C\}$ or $\{B\cap C,A\cup D\}$.
These are the four possibilities of how a new \cutsep\ can be built from $\{A,B\}$ and $\{C,D\}$ using just `$\cup$' and~`$\cap$'. 
Note that sometimes an intersection may be empty so some of the four possibilities may not be valid \cutsep s; and sometimes a possibility is a \cutsep\ but not an element of~$\cA$.
We will see in Lemma~\ref{lem:fish} that every possibility that happens to lie in~$\cA$ is already a corner of $\{A,B\}$ and $\{C,D\}$, provided that $\{A,B\}$ and $\{C,D\}$ cross.

Consider a family $ (\,\cA_i\mid i\in I\,)$ of non-empty subsets of $ \cA $ and some function $ \abs{\,\cdot\,}\colon I\to\N $, where $ I $ is a possibly infinite index set. We call $ \abs{i} $ the \emph{order} of the elements of~$ \cA_i $. 
We will consider $I$ to be the collection of all the unordered pairs formed by two disjoint edge-blocks of~$G$, and each $\cA_i$ will consist of all the \bondsep s of~$G$ that efficiently distinguish the two edge-blocks forming the pair~$i$.
Then every $\cA_i$ will be non-empty because the edge-blocks forming~$i$ are disjoint.
Our choice for~$\abs{i}$ will be the unique natural number that is the order of all the \bondsep s in~$\cA_i$.
Note that each of the two edge-blocks forming~$i$ will be a $k$-edge-block for some~$k>\abs{i}$.

When we wish to prove Theorem~\ref{thm:main} without its additional properties, then it suffices to find a subset $N\subset\cA$ that meets each $\cA_i$ and that is nested.
One of the main results of~\cite{InfiniteSplinters} states that we can find $N$ if the setup of the sets $\cA_i$ and their order function $\abs{\,\cdot\,}$ satisfies a number of properties.
The result can be applied even when $I$ is infinite, and moreover it ensures that $N$ is `canonical' for the given setup.
To state the properties and the result, we need one more definition.

The $ k $-\emph{crossing number} of~$ a $, for an $ a\in\cA $ and $ k\in\N $, is the number of elements of~$ \cA $ that cross~$ a $ and lie in some~$ \cA_i $ with~$ \abs{i}=k $.
Note that in our case, every \bondsep\ of order~$k$ can only possibly lie in sets $\cA_i$ with $\abs{i}=k$.
Thus, the $k$-crossing number of a \bondsep\ of arbitrary finite order will be the number of efficiently distinguishing \bondsep s of order $k$ crossing it.

We say that the family $ (\,\cA_i\mid i\in I\,)$ \emph{thinly splinters} if it satisfies the following three properties:
\begin{enumerate}
	\item For every $ i\in I $ all elements of $ \cA_i $ have finite $ k $-crossing number for all~\mbox{$k\le\abs{i} $}.\label[property]{property:fin_cn}
	\item If $ a_i\in\cA_i $ and $ a_j\in\cA_j $ cross with $ \abs{i}<\abs{j} $, then $ \cA_j $ contains some corner of~$ a_i $ and~$ a_j $ that is nested with~$ a_i $. \label[property]{property:fish}
	\item If $ a_i\in\cA_i $ and $ a_j\in\cA_j $ cross with $ \abs{i}=\abs{j}=k \in \N $, then either $ \cA_i $ contains a corner of $ a_i $ and~$ a_j $ with strictly lower $ k $-crossing number than~$ a_i $, or else $ \cA_j $ contains a corner of $ a_i $ and $ a_j $ with strictly lower~$ k $-crossing number than~$ a_j $. \label[property]{property:strong_submodular}
\end{enumerate}
The following theorem from \cite{InfiniteSplinters}, whose proof takes little more than half a page, will be the key ingredient for our proof of \cref{thm:main}:
\begin{theorem}[\,\cite{InfiniteSplinters}*{Theorem 1.2}\,]\label{thm:thinly}
If  $(\,\cA_i \mid i\in I\,)$ thinly splinters with respect to some reflexive symmetric relation $\sim$ on $\cA:=\bigcup_{i\in I}\cA_i$, then there is a set $N=N((\,\cA_i\mid i\in I\,))\subseteq \cA$ which meets every $\cA_i$ and is nested, i.e., $n_1\sim n_2$ for all $n_1,n_2\in N$.
Moreover, this set $N$ can be chosen invariant under isomorphisms:  
if $ \phi $ is an isomorphism between $ (\cA,\sim)$ and $(\cA',\sim')$, then we have $ N((\,\phi(\cA_i)\mid i\in I\,))=\phi(N((\,\cA_i\mid i\in I\,))) $.
\end{theorem}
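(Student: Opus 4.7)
The plan is to construct $N$ in stages by induction on $k\in\N$, extending at each stage a nested family $N_{<k}:=\{a_j^*\in\cA_j:|j|<k\}$ by a nested choice $a_i^*\in\cA_i$ for each $i$ with $|i|=k$. At stage $k$, for each such $i$ I set $\cA_i':=\{a\in\cA_i: a\sim b\text{ for all }b\in N_{<k}\}$, and (assuming $\cA_i'\neq\emptyset$) pick $a_i^*\in\cA_i'$ minimizing the $k$-crossing number; this minimum is a well-defined natural number by \cref{property:fin_cn}.

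Within-order nestedness will follow by combining \cref{property:strong_submodular} with the very definition of corner. If two minimizers $a_i^*,a_j^*$ crossed with $|i|=|j|=k$, then \cref{property:strong_submodular} would yield a corner $c$ of $a_i^*$ and $a_j^*$ lying in, say, $\cA_i$, with strictly smaller $k$-crossing number. Now every $b\in N_{<k}$ is nested with both $a_i^*$ and $a_j^*$ (because $a_i^*\in\cA_i'$ and $a_j^*\in\cA_j'$), so by definition of corner, $b$ is nested with $c$ as well. Hence $c\in\cA_i'$, contradicting the minimality of $a_i^*$ there.

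The main obstacle is showing $\cA_i'\neq\emptyset$. Starting with an arbitrary $a\in\cA_i$, as long as $a$ crosses some $a_j^*\in N_{<k}$, I invoke \cref{property:fish} to obtain a corner $c\in\cA_i$ of $a$ and $a_j^*$ that is nested with $a_j^*$. Whenever $b\in N_{<k}$ is nested with $a$, the pairwise nestedness of $N_{<k}$ gives $b\sim a_j^*$, and then the corner definition forces $b\sim c$. Thus the set of elements of $N_{<k}$ crossing $c$ is a strict subset of the set crossing $a$: the element $a_j^*$ is removed, and no new crossings are introduced. Since \cref{property:fin_cn} and the finiteness of $\{k'\in\N:k'<k\}$ together ensure that $a$ crosses only finitely many elements of $N_{<k}$, iterating this replacement terminates after finitely many steps in an element of $\cA_i'$.

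Canonicity should follow essentially for free: the construction uses only the intrinsic data $(\cA,\sim,|\cdot|)$, and any isomorphism $\phi$ preserves ranks, the nestedness relation, and crossing numbers, hence also the minimizers being selected. So the resulting $N$ commutes with $\phi$, giving the desired equivariance.
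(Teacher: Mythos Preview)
The paper does not prove this theorem; it is quoted from \cite{InfiniteSplinters} and used as a black box, so there is no in-paper argument to compare against. Your outline is essentially the standard proof, and the two substantive steps---showing $\cA_i'\neq\emptyset$ by iterating \cref{property:fish} against a strictly shrinking finite set of crossers from $N_{<k}$, and within-order nestedness via \cref{property:strong_submodular}---are correct as written.

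The canonicity paragraph, however, has a real gap. You explicitly ``pick $a_i^*\in\cA_i'$ minimizing the $k$-crossing number'', i.e.\ a \emph{single} element, and then claim isomorphisms preserve ``the minimizers being selected''. An isomorphism does carry the \emph{set} of minimizers in $\cA_i'$ to the set of minimizers on the other side, but when that set has more than one element an automorphism of $(\cA,\sim)$ may permute them nontrivially, so an arbitrary single choice $a_i^*$ need not be fixed, and $N$ as you defined it need not satisfy $\phi(N)=N'$. The repair is to put \emph{all} minimizers of each $\cA_i'$ into $N$ at stage~$k$. Your within-order argument already shows any two such minimizers are nested (the corner produced by \cref{property:strong_submodular} would lie in the appropriate $\cA_\ell'$ with strictly smaller $k$-crossing number, contradicting minimality), and every minimizer is nested with $N_{<k}$ by definition of $\cA_i'$; so the enlarged set remains nested and the construction is now genuinely isomorphism-invariant.
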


\section[Proof of the main result]{Proof of \texorpdfstring{\cref{thm:main}}{the main result}}\label{sec:MainProof}

\noindent Let $G$ be any connected graph, possibly infinite, and consider the set $\cA$ with the relation ${\sim}$ of `being nested', the family $(\,\cA_i\mid i\in I\,)$ and the function $\abs{\,\cdot\,}$, all defined with regard to the efficiently distinguishing \bondsep s of~$G$ like in Section~\ref{sec:keytool}.
Our aim is to employ Theorem~\ref{thm:thinly} to deduce Theorem~\ref{thm:main}.
In order to do that, we first have to verify that $(\,\cA_i\mid i\in I\,)$ thinly splinters.
To this end, we verify all the three properties~\ref{property:fin_cn}--\ref{property:strong_submodular} below.
The following lemma clearly implies~\cref{property:fin_cn}:
\begin{lemma}\label{lem:splinter_1}
Every finite-order \bondsep\ of a graph $G$ is crossed by only finitely many \bondsep s of~$G$ of order at most $k$, for any given~$k\in\N$.
\end{lemma}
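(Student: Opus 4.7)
The plan is to localize every crossing bond-separation $\{C,D\}$ to a fixed finite set of vertex pairs, and then apply Lemma~\ref{lem:finitely_many_bonds} pair by pair. Fix the bond-separation $\{A,B\}$ of finite order $\ell$, with corresponding bond $F := E(A,B)$, and let $V_F$ denote the set of endpoints of edges of $F$, a finite set with $\abs{V_F}\le 2\ell$.

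The first step is to show that if a bond-separation $\{C,D\}$ crosses $\{A,B\}$, then both $V_F\cap C$ and $V_F\cap D$ are non-empty. Here I would use that $\{C,D\}$ being a \emph{bond}-separation forces both $G[C]$ and $G[D]$ to be connected, while crossing yields vertices in each of the four corners $A\cap C$, $A\cap D$, $B\cap C$, $B\cap D$. A path inside $G[C]$ from $A\cap C$ to $B\cap C$ then crosses the cut $F$, and that edge lies entirely inside $C$, contributing its endpoints to $V_F\cap C$; the argument for $D$ is symmetric. Consequently, the cut $E(C,D)$ separates some pair of vertices in the finite set $V_F$.

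The second step is that for any two vertices $u,v\in V_F$, only finitely many bonds of size at most $k$ separate $u$ from $v$. Fix any $u$-$v$ path $P$ in $G$, which is automatically finite. Every bond separating $u$ from $v$ contains an edge of $P$; for each such edge $e$ and each size $j\le k$, Lemma~\ref{lem:finitely_many_bonds} guarantees that $e$ lies in only finitely many bonds of size $j$. Summing over $E(P)$ and $j\in\{1,\ldots,k\}$ yields the desired finite bound per pair.

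Combining the two steps then finishes the proof: every bond-separation $\{C,D\}$ of order at most $k$ that crosses $\{A,B\}$ separates some pair of vertices in the finite set $V_F$, and for each of the finitely many pairs in $V_F\times V_F$ there are only finitely many candidates. The main subtlety to flag is the tempting but false shortcut of claiming that crossing bonds must share an edge; this fails already in the $3$-cube $Q_3$, where two coordinate cuts cross yet are edge-disjoint. Hence the localization must be carried out at the level of endpoint pairs rather than shared edges.
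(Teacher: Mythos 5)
Your proposal is correct and takes essentially the same approach as the paper: both arguments localize a crossing bond-separation $\{C,D\}$ to the endvertices of the fixed finite cut $F=E(A,B)$ (using connectivity of the sides $G[C]$ and $G[D]$) and then invoke Lemma~\ref{lem:finitely_many_bonds} along a fixed finite path. The only difference is presentational: the paper argues by contradiction with two pigeon-hole steps, whereas you count directly over the finitely many pairs in $V_F\times V_F$.
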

\begin{proof}
Our proof starts with an observation.
If two \bondsep s $\{A,B\}$ and $\{A',B'\}$ cross, then $A'$ contains a vertex from $A$ and a vertex from $B$. 
Let $v\in A'\cap A$ and $w\in A'\cap B$. 
Since $G[A']$ is connected, there exists a path from $v$ to $w$ in $G[A']$. 
This path, and thus $G[A']$, must contain an edge from $A$ to~$B$. Similarly, $G[B']$ must contain an edge from $A$ to $B$.

Now suppose for a contradiction that there are infinitely many \bondsep s of order at most a given $k\in\N$, which all cross some finite-order \bondsep\ $\{A,B\}$. 
Without loss of generality, all the crossing \bondsep s have order~$k$.
Using our observation, the pigeon-hole principle and the finite order of $\{A,B\}$, we find two edges $e,f\in E(A,B)$ and infinitely many \bondsep s $\{A_0,B_0\},\{A_1,B_1\},\ldots\,$ that all cross $\{A,B\}$ so that $e\in G[A_n]$ and $f\in G[B_n]$ for all~$n\in\N$.
Let $P$ be a path in $G$ that links an endvertex $v$ of $e$ to an endvertex $w$ of~$f$.
Now $v$ is contained in all the $A_n$ and $w$ is contained in all the $B_n$, thus for every $\{A_n,B_n\}$ there exists an edge of $P$ with one end in $A_n$ and the other in $B_n$. However, every $\{A_n,B_n\}$ corresponds to a bond of size $k$ of $G$ and, again by the pigeon-hole principle, infinitely many of theses bonds must contain the same edge of~$P$. This contradicts \cref{lem:finitely_many_bonds}.
\end{proof}

Next, to show the second property, we need the following lemma:
\begin{lemma}\label{lem:fish}
If two \cutsep s $\{A_1,B_1\}$ and $\{A_2,B_2\}$ cross, and a third \cutsep\ $\{X,Y\}$ is nested with both $\{A_1,B_1\}$ and $\{A_2,B_2\}$, then $\{X,Y\}$ is nested with $\{A_1\cap A_2,B_1\cup B_2\}$ (provided that this is a \cutsep ).
\end{lemma}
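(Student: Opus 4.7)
The plan is a straightforward case analysis, driven by the observation that nestedness of $\{X,Y\}$ with $\{A_i,B_i\}$ means precisely that one of the four inclusions $X\subseteq A_i$, $X\subseteq B_i$, $Y\subseteq A_i$, or $Y\subseteq B_i$ holds, for each $i\in\{1,2\}$. The goal is to produce one of the inclusions $X\subseteq A_1\cap A_2$, $Y\subseteq A_1\cap A_2$, $X\subseteq B_1\cup B_2$, or $Y\subseteq B_1\cup B_2$.

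First I would dispose of the easy cases. If for some $i\in\{1,2\}$ the witness of nestedness with $\{A_i,B_i\}$ places a side $Z\in\{X,Y\}$ inside~$B_i$, then $Z\subseteq B_i\subseteq B_1\cup B_2$, and we are done. Hence we may assume from here on that both witnesses are ``$A$-sided'', i.e.\ that some side of $\{X,Y\}$ lies in $A_1$ and some (possibly the same, possibly the other) lies in $A_2$.

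Now I split into two further subcases. If the same side, say $X$ after renaming, lies in $A_1$ and in~$A_2$, then $X\subseteq A_1\cap A_2$ and we are done. Otherwise, after renaming we may assume $X\subseteq A_1$ while $Y\subseteq A_2$. Then complementation gives $B_2=V\setminus A_2\subseteq V\setminus Y=X\subseteq A_1$, hence $B_2\subseteq A_1$; but this is one of the four inclusions making $\{A_1,B_1\}$ and $\{A_2,B_2\}$ nested, contradicting the hypothesis that they cross. This final subcase is the only place where the crossing hypothesis is used, and it is precisely what eliminates the one configuration not already covered by the earlier subcases.

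I do not foresee any genuine obstacle: the proof is purely a bookkeeping argument, and the proviso ``provided that $\{A_1\cap A_2,B_1\cup B_2\}$ is a \cutsep '' simply frees us from worrying about the degenerate case where $A_1\cap A_2=\emptyset$ or $B_1\cup B_2=V$.
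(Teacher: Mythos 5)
Your proof is correct and follows essentially the same case analysis as the paper's: dispose of the cases where a side of $\{X,Y\}$ lies in some $B_i$, then use the crossing hypothesis (equivalently $B_1\cap B_2\neq\emptyset$, i.e.\ $A_1\cup A_2\neq V$) to rule out the mixed configuration $X\subseteq A_1$, $Y\subseteq A_2$, leaving only the case where one side lies in $A_1\cap A_2$. The paper phrases the elimination of the mixed case via $X\cup Y=V\not\subseteq A_1\cup A_2$ rather than your complementation $B_2\subseteq A_1$, but these are the same fact.
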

\begin{proof}
As $\{X,Y\}$ is a \cutsep\ that is nested with $\{A_1,B_1\}$ and $\{A_2,B_2\}$, either $X$ or $Y$ is a subset of $B_1$ or $B_2$, in which case it is immediate that $\{X,Y\}$ is nested with $\{A_1\cap A_2, B_1\cup B_2\}$ as desired, or, one of $X$ and $Y$ is a subset of $A_1$ and one of $X$ and $Y$ is a subset of $A_2$. However, since $A_1\cup A_2\neq V(G)$ (as $\{A_1,B_1\}$ and $\{A_2,B_2\}$ cross) it needs to be the case that either $X\subseteq A_1\cap A_2$ or $Y\subseteq A_1\cap A_2$, so in either case $\{X,Y\}$ is nested with $\{A_1\cap A_2, B_1\cup B_2\}$ as desired.
\end{proof}

Using this lemma, we can now show \cref{property:fish}:

\begin{lemma}\label{lem:splinter_2}
If $\{A,B\}\in \cA_i$ and $\{C,D\}\in \cA_j$ cross with $\abs{i}<\abs{j}$, then $\cA_j$ contains some corner of $\{A,B\}$ and $\{C,D\}$ that is nested with $\{A,B\}$.
\end{lemma}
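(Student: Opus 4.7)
The plan is to find the required corner among the four visible corners of $\{A,B\}$ and $\{C,D\}$, and then to refine it to a bond-separation if necessary. I would set $Q_1 = A\cap C$, $Q_2 = A\cap D$, $Q_3 = B\cap C$, $Q_4 = B\cap D$; these four quadrants are all non-empty because the two separations cross. Let $X, Y$ be the pair of edge-blocks efficiently distinguished by $\{C,D\}$, chosen so that $X \lives C$ and $Y \lives D$. Since $\abs{i} < \abs{j}$ is strictly below the minimum $X$--$Y$ edge cut, $\{A,B\}$ cannot separate $X$ from $Y$, so both must live on the same side, say $B$; then $X \lives Q_3$ and $Y \lives Q_4$. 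Each of the four visible corners $\{Q_r, V\setminus Q_r\}$ is then automatically nested with $\{A,B\}$; the two that separate $X$ from $Y$, namely $\{Q_3, V\setminus Q_3\}$ and $\{Q_4, V\setminus Q_4\}$, have order at least $\abs{j}$; and submodularity applied to $(A,C)$ and $(A,D)$ yields that the other two have order at most $\abs{i}$.

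Next I would show that at least one of $\{Q_3, V\setminus Q_3\}$ and $\{Q_4, V\setminus Q_4\}$ has order exactly $\abs{j}$. If both had strictly larger order, the same submodular inequalities would force $\{Q_1, V\setminus Q_1\}$ and $\{Q_2, V\setminus Q_2\}$ to have order strictly less than $\abs{i}$. Letting $X_A \lives A$ and $X_B \lives B$ be the pair efficiently distinguished by $\{A,B\}$, I would obtain a contradiction as follows: since $\{Q_1, V\setminus Q_1\}$ has order strictly below the minimum $X_A$--$X_B$ cut, it cannot separate them; as $X_B \lives B \subset V\setminus Q_1$, the block $X_A$ too must lie on the $V\setminus Q_1$ side, which combined with $X_A \lives A = Q_1 \cup Q_2$ forces $X_A \lives Q_2$. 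But then $\{Q_2, V\setminus Q_2\}$ does separate $X_A$ from $X_B$, giving its order at least $\abs{i}$---a contradiction. After relabeling, $\{Q_3, V\setminus Q_3\}$ then has order exactly $\abs{j}$.

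The final step is to upgrade $\{Q_3, V\setminus Q_3\}$ to a bond-separation, which may fail to be a bond only because $Q_3$ can be disconnected. I would let $Q_3^*$ denote the component of $G[Q_3]$ containing $X$; this is well defined because splitting $X$ across distinct components of $G[Q_3]$ would be witnessed by a cut of order at most $\abs{j}$, contradicting the $(\abs{j}+1)$-inseparability of $X$. Then $\{Q_3^*, V\setminus Q_3^*\}$ still has order $\abs{j}$, since no edges run between distinct components of $G[Q_3]$; its side $Q_3^*$ is connected by construction; and its other side $V\setminus Q_3^* = (A\cup D) \cup (Q_3 \setminus Q_3^*)$ is connected because $G[A\cup D]$ is connected via $Q_2 \neq \emptyset$ and because connectedness of $G[C] = G[Q_1 \cup Q_3]$ attaches every remaining component of $G[Q_3]$ to $Q_1 \subset A\cup D$. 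Hence $\{Q_3^*, V\setminus Q_3^*\} \in \cA_j$; it is nested with $\{A,B\}$ via $Q_3^* \subset B$; and a short case check---whose main content is that the bond hypothesis on any $\{U,W\} \in \cA$ nested with both $\{A,B\}$ and $\{C,D\}$ forces any side contained in $Q_3$ into a single component of $G[Q_3]$, with the crossing hypothesis eliminating the degenerate containments---verifies that it is a corner of $\{A,B\}$ and $\{C,D\}$ in the abstract sense.

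The main obstacle is the refinement step: once $X, Y$ are localised to $B$ the submodular bookkeeping that places one of the candidate corners at order exactly $\abs{j}$ is routine, but producing a bond-separation in $\cA_j$ (rather than merely a cut-separation) and showing it remains an abstract corner requires interleaving the connectedness of $G[A]$, $G[D]$, and $G[C]$ with the component structure of $G[Q_3]$ and the inseparability of $X$ inside it.
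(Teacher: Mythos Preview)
Your core argument matches the paper's: localise the pair $j$ to one side of $\{A,B\}$ via $|i|<|j|$, then use the two submodular inequalities to pin one of the two relevant corners at order exactly $|j|$, deriving the contradiction from the pair $i$ if both strictly exceed~$|j|$.

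The difference is that your entire refinement step is unnecessary. The paper observes in the introduction that any edge set efficiently distinguishing two edge-blocks is automatically a bond; since $\{Q_3, V\setminus Q_3\}$ has order exactly $|j|$ and distinguishes the pair~$j$, it does so \emph{efficiently} and is therefore already a bond-separation in $\cA_j$, with Lemma~\ref{lem:fish} immediately certifying it as a corner. Your own computation shows this directly: you assert $|E(Q_3^*, V\setminus Q_3^*)|=|j|$, but this cut is contained in $E(Q_3, V\setminus Q_3)$, and any component of $G[Q_3]$ other than $Q_3^*$ must send an edge to $V\setminus Q_3$ by connectedness of~$G$, so equality of the orders forces $Q_3^*=Q_3$. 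Hence the passage to $Q_3^*$, the connectedness verification for $V\setminus Q_3^*$, and the bespoke case analysis for the abstract corner property are all redundant; the paper's proof simply omits them.
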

\begin{proof}
Let us denote the two edge-blocks in $j$ as $\beta$ and $\beta'$ so that $\beta\lives C$ and $\beta'\lives D$. 
Since the order of $\{A,B\}$ is less than~$\abs{j}$, we may assume without loss of generality that $\beta,\beta'\lives A$.
We claim that either $\{A\cap C, B\cup D\}$ or $\{A\cap D, B\cup C\}$ is the desired corner in $\cA_j$, and we refer to them as \emph{corner candidates}. 
Both are \cutsep s that distinguish $\beta$~and~$\beta'$, and both are nested with~$\{A,B\}$.
Furthermore, by \cref{lem:fish}, every \cutsep\ that is nested with both $\{A,B\}$ and $\{C,D\}$ is also nested with both corner candidates.
It remains to show that at least one of the two corner candidates has order at most~$\abs{j}$, because then it lies in~$\cA_j$ as desired.

Let us assume for a contradiction that both corner candidates have order greater than~$\abs{j}$.
Then the two inequalities
\begin{align*}
    \abs{E(A\cap C,B\cup D)}+\abs{E(B\cap D,A\cup C)}\leq{}&\abs{E(A,B)}+\abs{E(C,D)}\\
    \text{and }\;\abs{E(A\cap D,B\cup C)}+\abs{E(B\cap C,A\cup D)} \leq{}&\abs{E(A,B)}+\abs{E(C,D)}
\end{align*}
imply
\[
\abs{E(B\cap D,A\cup C)}<\abs{i}\quad\text{and}\quad \abs{E(B\cap C,A\cup D)}<\abs{i}.
\]
Recall that the edge-blocks forming the pair~$i$ are $k$-edge-blocks for some values $k$ greater than~$\abs{i}$.
One of the edge-blocks of the pair~$i$ lives in~$B$, and due to the latter two inequalities, this edge-block must live either in $B\cap D$ or in~$B\cap C$.
But then either $\{B\cap D,A\cup C\}$ or $\{B\cap C,A\cup D\}$ is a \cutsep\ of order less than~$\abs{i}$ that distinguishes the two edge-blocks forming the pair~$i$, contradicting the fact that an order of at least~$\abs{i}$ is required for that.
\end{proof}
Finally, to show the third property, we need the following lemma:
\begin{lemma}\label{lem:cn_submodular}
Let $\{A_1,B_1\}$ and $\{A_2,B_2\}$ be crossing \cutsep s such that both $\{A_1\cap A_2,B_1\cup B_2\}$ and $\{A_1\cup A_2,B_1\cap B_2\}$ are \cutsep s as well.
Then every \cutsep\ that crosses both $\{A_1\cap A_2,B_1\cup B_2\}$ and $\{A_1\cup A_2,B_1\cap B_2\}$ must also cross both $\{A_1,B_1\}$ and $\{A_2,B_2\}$.
\end{lemma}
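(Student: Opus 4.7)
The plan is to prove the contrapositive: if a \cutsep\ $\{X,Y\}$ is nested with $\{A_1,B_1\}$, then $\{X,Y\}$ is nested with at least one of the two corner separations $\{A_1\cap A_2,B_1\cup B_2\}$ and $\{A_1\cup A_2,B_1\cap B_2\}$; and symmetrically for $\{A_2,B_2\}$. Since by assumption $\{X,Y\}$ crosses both corners, this forces it to cross both $\{A_1,B_1\}$ and $\{A_2,B_2\}$.

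The argument reduces to a short case analysis based on the definition of nestedness. Recall that $\{X,Y\}$ nested with $\{A_1,B_1\}$ means that one of $X\subset A_1$, $X\subset B_1$, $Y\subset A_1$, $Y\subset B_1$ holds. In each case one of the two sides of $\{X,Y\}$ is contained in $A_1$ or in $B_1$, and hence is automatically contained in one of $A_1\cup A_2$ or $B_1\cup B_2$: explicitly, $X\subset A_1$ gives $X\subset A_1\cup A_2$, so $\{X,Y\}$ is nested with $\{A_1\cup A_2,B_1\cap B_2\}$; $X\subset B_1$ gives $X\subset B_1\cup B_2$, so $\{X,Y\}$ is nested with $\{A_1\cap A_2,B_1\cup B_2\}$; and the two cases with $Y$ in place of $X$ are analogous. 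This yields the desired conclusion for $\{A_1,B_1\}$, and an identical argument with the roles of the indices swapped yields it for $\{A_2,B_2\}$.

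I do not expect any substantial obstacle here: the statement is essentially a formal consequence of the definition of nestedness together with the trivial inclusions $A_i\subset A_1\cup A_2$ and $B_i\subset B_1\cup B_2$. The only mild subtlety is that we are assuming as given that all four of $\{A_1,B_1\}$, $\{A_2,B_2\}$, $\{A_1\cap A_2,B_1\cup B_2\}$ and $\{A_1\cup A_2,B_1\cap B_2\}$ are genuine \cutsep s (none of their sides is empty); this hypothesis is used only implicitly in phrasing the statement, and no additional work is needed to exploit it.
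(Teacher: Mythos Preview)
Your proof is correct and essentially equivalent to the paper's. The only difference is that the paper argues directly rather than by contrapositive: since $\{X,Y\}$ crosses $\{A_1\cap A_2,\,B_1\cup B_2\}$, both $X$ and $Y$ meet $A_1\cap A_2$, and since it crosses $\{A_1\cup A_2,\,B_1\cap B_2\}$, both $X$ and $Y$ meet $B_1\cap B_2$; hence $\{X,Y\}$ crosses each $\{A_i,B_i\}$.
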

\begin{proof}
Consider any \cutsep\ $\{X,Y\}$ that crosses both $\{A_1\cap A_2,B_1\cup B_2\}$ and $\{A_1\cup A_2,B_1\cap B_2\}$.
Since $\{X,Y\}$ crosses $\{A_1\cap A_2,B_1\cup B_2\}$, both $X$ and $Y$ contain a vertex from $A_1\cap A_2$. Since  $\{X,Y\}$ crosses  $\{A_1\cup A_2,B_1\cap B_2\}$, both $X$ and $Y$ contain a vertex from $B_1\cap B_2$. 
Hence $\{X,Y\}$ crosses both $\{A_1,B_1\}$ and $\{A_2,B_2\}$.
\end{proof}
Let us now show \cref{property:strong_submodular}:
\begin{lemma}\label{lem:splinter_3}
If $ \{A,B\}\in\cA_i $ and $ \{C,D\}\in\cA_j $ cross with $ \abs{i}=\abs{j}=k \in \N $, then either $ \cA_i $ contains a corner of $  \{A,B\} $ and $ \{C,D\}$ with strictly lower $ k $-crossing number than~$ \{A,B\}$, or else $ \cA_j $ contains a corner of $ \{A,B\}$ and $ \{C,D\}$ with strictly lower~$ k $-crossing number than~$ \{C,D\}$. 
\end{lemma}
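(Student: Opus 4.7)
The plan is to lift cut-submodularity to $k$-crossing numbers via \cref{lem:cn_submodular}, and then to do a brief case analysis on where the edge-blocks sit. Let $Q_1:=A\cap C$, $Q_2:=A\cap D$, $Q_3:=B\cap C$, $Q_4:=B\cap D$ be the four quadrants (all non-empty, since $\{A,B\}$ and $\{C,D\}$ cross), and for $m\in\{1,2,3,4\}$ write $c_m:=\{Q_m, V\setminus Q_m\}$. Up to relabelling these are the four candidate corners of $\{A,B\}$ and $\{C,D\}$ obtainable via $\cup$ and $\cap$, and cut-submodularity pairs them diagonally as
\[
|c_1|+|c_4|\;\le\;|\{A,B\}|+|\{C,D\}|\;=\;2k\qquad\text{and}\qquad |c_2|+|c_3|\;\le\;2k.
\]

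The first key step is to lift each of these inequalities to the level of $k$-crossing numbers. For every \bondsep\ $Z\in\cA$ of order $k$, I would argue that the number of members of $\{c_1,c_4\}$ that $Z$ crosses is at most the number of members of $\{\{A,B\},\{C,D\}\}$ that $Z$ crosses: \cref{lem:cn_submodular} covers the case when $Z$ crosses both $c_1$ and $c_4$, and if $Z$ is nested with both $\{A,B\}$ and $\{C,D\}$, then some side of $Z$ is contained in a single quadrant, making $Z$ nested with each $c_m$. Summing over~$Z$, and noting that $\{A,B\}$ and $\{C,D\}$ themselves are nested with all four $c_m$ but cross each other, yields the \emph{submodular sum inequality}
\[
\mathrm{Cr}(c_1)+\mathrm{Cr}(c_4)+2\;\le\;\mathrm{Cr}(\{A,B\})+\mathrm{Cr}(\{C,D\}),
\]
and likewise for $(c_2,c_3)$; here $\mathrm{Cr}$ denotes the $k$-crossing number.

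Next I would locate the edge-blocks. Let $\alpha,\alpha'$ be the edge-blocks forming~$i$ (with $\alpha\lives A$, $\alpha'\lives B$) and $\beta,\beta'$ those forming~$j$. Each is a $k'$-edge-block for some $k'>k$, so each lives in a single quadrant; set $\alpha\lives Q_p$, $\alpha'\lives Q_q$, $\beta\lives Q_r$, $\beta'\lives Q_s$. Then $c_m$ separates $\alpha$ from $\alpha'$ iff $m\in\{p,q\}$, and in this case $|c_m|\ge k$; analogously for $\{r,s\}$ and $\beta,\beta'$. Since $p\in\{1,2\}$, $q\in\{3,4\}$, $r\in\{1,3\}$, $s\in\{2,4\}$, a quick check gives $\{1,4\}\subseteq\{p,q,r,s\}$ or $\{2,3\}\subseteq\{p,q,r,s\}$; WLOG the former, so by cut-submodularity $|c_1|=|c_4|=k$, and each of $c_1,c_4$ lies in $\cA_i\cup\cA_j$ since it separates at least one of the two pairs.

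Finally, I argue by contradiction: assume every corner in $\cA_i$ has $k$-crossing number at least $\mathrm{Cr}(\{A,B\})$ and every corner in $\cA_j$ has $k$-crossing number at least $\mathrm{Cr}(\{C,D\})$. If at least one of $c_1,c_4$ lies in $\cA_i$ and at least one lies in $\cA_j$ (possibly the same corner), then lower-bounding each term gives $\mathrm{Cr}(c_1)+\mathrm{Cr}(c_4)\ge\mathrm{Cr}(\{A,B\})+\mathrm{Cr}(\{C,D\})$, directly contradicting the sum inequality. Otherwise both $c_1,c_4$ lie only in $\cA_i$ (the $\cA_j$-only subcase being symmetric), which forces $\{p,q\}=\{1,4\}$ and $\{r,s\}=\{2,3\}$; the analysis applied to $(c_2,c_3)$ then places both $c_2,c_3$ only in $\cA_j$, and combining the two sum inequalities yields $\mathrm{Cr}(\{A,B\})+2\le\mathrm{Cr}(\{C,D\})$ and $\mathrm{Cr}(\{C,D\})+2\le\mathrm{Cr}(\{A,B\})$---a contradiction. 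The main obstacle is the bookkeeping in this last case split; the heart of the argument is the submodular sum inequality.
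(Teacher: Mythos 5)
Your proof is correct, and it reaches the same conclusion through the same two lemmas (\cref{lem:fish} and \cref{lem:cn_submodular}), but it organises the argument differently from the paper. The paper first normalises to $\mathrm{Cr}(\{A,B\})\le\mathrm{Cr}(\{C,D\})$ and then splits into two cases according to whether $\{A,B\}$ separates the pair $j$, in each case exhibiting exactly two diagonal corners, placing them into the appropriate $\cA$-families, and reading off the conclusion directly from the strict sum inequality. You instead set up the full four-quadrant picture at once, formulate the submodular sum inequality with the explicit slack $+2$, argue by contradiction from the negation of the conclusion, and split cases according to which of $\cA_i,\cA_j$ the corners $c_1,c_4$ happen to lie in. Your case~B (both $c_1,c_4$ live only in $\cA_i$) corresponds to the paper's case~1, and your case~A corresponds to the paper's case~2, so the two routes are dual rather than identical. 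Your version is a bit more symmetric and makes the quantitative gain of $2$ explicit, at the cost of a somewhat longer bookkeeping phase; the paper's is more direct and slightly shorter. One small point to make explicit in your write-up: to conclude that, say, $c_1\in\cA_j$ gives a \emph{corner} in $\cA_j$, you should state that $c_1$ is in fact a corner of $\{A,B\}$ and $\{C,D\}$, which again is \cref{lem:fish} --- you invoke that lemma to bound crossing numbers but never explicitly record that it also verifies the corner property. (Both you and the paper also leave implicit the verification that the corner candidates are bond-separations, not merely cut-separations, so that they actually lie in $\cA$; a short connectivity argument using efficiency covers this, but it is worth being aware that the step is there.)
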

\begin{proof}
Let us assume without loss of generality that the $k$-crossing number of $\{A,B\}$ is less than or equal to the $k$-crossing number of $\{C,D\}$, and let us denote the edge-blocks in $j$ as $\beta$ and $\beta'$ so that $\beta\lives C$ and $\beta'\lives D$. 
We consider two cases.

In the first case, $\{A,B\}$ distinguishes the two edge-blocks $\beta$~and~$\beta'$.
Hence $\beta\lives A\cap C$ and $\beta'\lives B\cap D$, say. 
Then both $\{A\cap C, B\cup D\}$ and $\{B\cap D, A\cup C\}$ distinguish the two edge-blocks $\beta$~and~$\beta'$ that form the pair $j$, and so they have order at least $\abs{j}=k$.
Furthermore, we have
\begin{equation}\label{eq:submodular}
    \abs{E(A\cap C,B\cup D)}+\abs{E(B\cap D,A\cup C)}\le\abs{E(A,B)}+\abs{E(C,D)}=2k,
\end{equation}
so both $\{A\cap C,B\cup D\}$ and $\{B\cap D,A\cup C\}$ must have order exactly~$k$.
In particular, both are contained in~$\cA_j$, and they are corners of $\{A,B\}$ and $\{C,D\}$ by~\cref{lem:fish}.
Next, we assert that the $k$-crossing numbers of $\{A\cap C, B\cup D\}$ and $\{B\cap D, A\cup C\}$ in sum are less than the sum of the $k$-crossing numbers of $\{A,B\}$ and $\{C,D\}$.
Indeed, all the $k$-crossing numbers involved are finite by~\cref{property:fin_cn}, and the two \cutsep s $\{A,B\}$ and $\{C,D\}$ cross which allows us to deduce the desired inequality between the sums by Lemmas~\ref{lem:fish} and~\ref{lem:cn_submodular}, as follows: 
\begin{itemize}
    \item by \cref{lem:fish}, every $\{X,Y\}\in\cA$ of order~$k$ that crosses at least one of $\{A\cap C, B\cup D\}$ and $\{B\cap D, A\cup C\}$ must cross at least one of $\{A,B\}$ and $\{C,D\}$; and
    \item by \cref{lem:cn_submodular}, every $\{X,Y\}\in\cA$ of order~$k$ that crosses both $\{A\cap C, B\cup D\}$ and $\{B\cap D, A\cup C\}$ must cross both $\{A,B\}$ and $\{C,D\}$.
\end{itemize}
But then the strict inequality between the sums, plus our initial assumption that the $k$-crossing number of $\{A,B\}$ is less than or equal to that of~$\{C,D\}$, implies that one of $\{A\cap C, B\cup D\}$ and $\{B\cap D, A\cup C\}$ must have a $k$-crossing number less than the one of $\{C,D\}$, as desired.

In the second case, $\{A,B\}$ does not distinguish the two edge-blocks $\beta$~and~$\beta'$.
Recall that all the edge-blocks in the two pairs $i$~and~$j$ are $\ell$-edge-blocks for some values~$\ell>k$.
Hence $\beta\cup \beta'\lives A$, say.
Let us denote by $\beta''$ the edge-block in $i$ that lives in~$B$.
Then either $\beta''\lives B\cap C$ or $\beta''\lives B\cap D$, say $\beta''\lives B\cap D$.
In total:
\[
    \beta\lives A\cap C,\;\, \beta'\lives A\cap D\text{ and }\beta''\lives B\cap D.
\]
Therefore, $\{A\cap C, B\cup D\}$ distinguishes the two edge-blocks $\beta$~and~$\beta'$ forming the pair~$j$ which imposes an order of at least~$k$, and $\{B\cap D, A\cup C\}$ distinguishes the two edge-blocks forming the pair~$i$ which imposes an order of at least~$k$ as well.
Combining these lower bounds with~(\ref{eq:submodular}) we deduce that both $\{A\cap C, B\cup D\}$ and $\{B\cap D, A\cup C\}$ have order exactly~$k$.
In particular, they are contained in $\cA_j$ and $\cA_i$ respectively, and they are corners of $\{A,B\}$ and $\{C,D\}$ by~\cref{lem:fish}.
Repeating the final argument of the first case, we deduce from the strict inequality between the sums of the $k$-crossing numbers that either $\{A\cap C, B\cup D\}\in\cA_j$ has strictly lower $k$-crossing number than~$\{C,D\}$, or else $\{B\cap D, A\cup C\}\in\cA_i$ has strictly lower $k$-crossing number than~$\{A,B\}$, completing the proof.
\end{proof}
We can now prove our main result:
\begin{proof}[Proof of \cref{thm:main}]
Let $G$ be any connected graph.
By \cref{lem:splinter_1}, \cref{lem:splinter_2} and \cref{lem:splinter_3} we may apply \cref{thm:thinly} to the family $(\,\cA_i\mid i\in I\,)$ defined at the beginning of the section. 
This results in the desired nested set $N(G)\subset\cA$. 
To see that it is canonical, note that any isomorphism $\phi\colon G\to G'$ induces an isomorphism between $(\cA,\sim)$ and $(\cA',\sim')$, where the latter is defined like the former but with regard to~$G'$. 
Thus, by the `moreover' part of \cref{thm:thinly}, we indeed obtain that $\phi(N(G))=N(\phi(G))$. 
\end{proof}

\section{Nested sets of bonds and \tcd s}\label{sec:TreePartition}

\noindent Recall that, given a connected graph $G$, we denote by $N=N(G)$ the canonical set of nested bonds from Theorem~\ref{thm:main} that efficiently distinguishes all the edge-blocks of~$G$.
Furthermore, recall that the subset $N_k\subset N$ is formed by the bonds in~$N$ of order less than~$k$.
In this section, we show that:
\begin{itemize}
    \item For every $k\in\N$, the subset $N_k\subset N$ is equal to the set of fundamental cuts of a tree-cut decomposition of $G$ that decomposes $G$ into its $k$-edge-blocks.
\end{itemize}
To this end, we first introduce the notion of a tree-cut decomposition.
Recall that a \emph{near-partition} of a set $M$ is a family of pairwise disjoint subsets $M_\xi\subset M$, possibly empty, such that~$\bigcup_\xi M_\xi=M$.

Let $G$ be a graph, $T$ a tree, and let $\cX=(X_t)_{t\in T}$ be a family of vertex sets $X_t\subset V(G)$ indexed by the nodes $t$ of~$T$.
The pair $(T,\cX)$ is called a \emph{\tcd } of $G$ if $\cX$ is a near-partition of~$V(G)$.
The vertex sets $X_t$ are the \emph{parts} or \emph{bags} of the \tcd\ $(T,\cX)$.
When we say that $(T,\cX)$ \emph{decomposes} $G$ into its $k$-edge-blocks for a given~$k$, we mean that the non-empty parts of $(T,\cX)$ are the sets of vertices of the $k$-edge-blocks of~$G$.
In this paper, we require the nodes with non-empty parts to be \emph{dense} in~$T$ in that every edge of $T$ lies on a path in~$T$ that links up two nodes with non-empty parts.

If $(T,\cX)$ is a \tcd , then every edge $t_1 t_2$ of its \emph{decomposition tree} $T$ induces a cut $E(\,\bigcup_{t\in T_1}X_t\,,\,\bigcup_{t\in T_2}X_t\,)$ of $G$ where $T_1$ and $T_2$ are the two components of $T-t_1 t_2$ with $t_1\in T_1$ and $t_2\in T_2$.
Here, the nodes with non-empty parts densely lying in~$T$ ensures that both unions are non-empty, which is required of the sides of a cut.
We call these induced cuts the \emph{fundamental cuts} of the \tcd ~$(T,\cX)$.
Note that, unlike the fundamental cuts of a spanning tree, the fundamental cuts of a \tcd\ need not be bonds.

It is important that parts of a \tcd\ are allowed to be empty, as the following example demonstrates.

\begin{example}
Let the graph $G$ arise from the disjoint union of three copies $G_1,G_2$ and $G_3$ of $K^4$ by selecting one vertex $v_i\in G_i$ for all $i\in [3]$ and adding all edges $v_i v_j$ ($i\neq j\in [3]$).
Then the $3$-edge-blocks of $G$ are the three vertex sets $V(G_1)$, $V(G_2)$ and $V(G_3)$.
Since $N(G)$ is canonical, we have $N_3(G)=\{\,F_1,F_2,F_3\,\}$ where $F_i:=\{\,v_i v_j\mid j\neq i\,\}$. 
However, we cannot find a \tcd\ $(T,\cX)$ of $G$ such that, on the one hand, $T$ is a tree on three nodes $t_1,t_2,t_3$ and $X_{t_i}=V(G_i)$ for all $i\in [3]$, and on the other hand, the fundamental cuts of $(T,\cX)$ are precisely the bonds in~$N_3(G)$:
the decomposition tree $T$ would then be a path of length two, and hence would induce two fundamental cuts, but $N_3(G)$ consists of three bonds.
\end{example}

To relate $N_k$ to a tree-cut decomposition, we will use a theorem by Gollin and Kneip.
In order to state their theorem, we need to introduce separation systems and $S$-trees first.

\subsection{Separation systems and \texorpdfstring{$\boldsymbol{S}$}{S}-trees}
Separation systems and $S$-trees are two fundamental tools in graph minor theory.
In this section we briefly introduce the definitions from~\cites{AbstractSepSys,DiestelBook5,RhdTreeSets} that we need.

A \emph{separation of a set} $V$ is an unordered pair $\{A,B\}$ such that $A\cup B=V$.
The ordered pairs $(A,B)$ and $(B,A)$ are its \emph{orientations}.
Then the \emph{oriented separations} of $V$ are the orientations of its separations.
The map that sends every oriented separation $(A,B)$ to its \emph{inverse} $(B,A)$ is an involution that reverses the partial ordering
\[
    (A,B)\le (C,D)\;:\Leftrightarrow\;A\subset C\text{ and }B\supset D
\]
since $(A,B)\le (C,D)$ is equivalent to $(D,C)\le (B,A)$.

More generally, a \emph{separation system} is a triple $(\vS,{\le},{}^\ast)$ where $(\vS,{\le})$ is a partially ordered set and ${}^\ast\colon\vS\to\vS$ is an order-reversing involution.
We refer to the elements of $\vS$ as \emph{oriented separations}.
If~an oriented separation is denoted by $\vs$, then we denote its \emph{inverse} $\vs^\ast$ as $\sv$, and vice versa.
That ${}^\ast$ is \emph{order-reversing} means $\vr\le\vs\Leftrightarrow\rv\ge\sv$ for all $\vr,\vs\in\vS$.

A \emph{separation} is an unordered pair of the form $\{\vs,\sv\}$, and then denoted by $s$.
Its elements $\vs$ and $\sv$ are the \emph{orientations} of $s$.
The set of all separations $\{\vs,\sv\}\subset\vS$ is denoted by $S$.
When a separation is introduced as $s$ without specifying its elements first, we use $\vs$ and $\sv$ (arbitrarily) to refer to these elements.

Separations of sets, and their orientations, are an instance of this abstract setup if we identify $\{A,B\}$ with $\{\,(A,B)\,,(B,A)\,\}$.
Hence the \cutsep s of a graph define a separation system.
Here is another example:
The set $\vE(T):=\{\,(x,y)\mid xy\in E(T)\,\}$ of all \emph{orientations} $(x,y)$ of the edges $xy=\{x,y\}$ of a tree $T$ forms a separation system with the involution $(x,y)\mapsto (y,x)$ and the natural partial ordering on $\vE(T)$ in which $(x,y)<(u,v)$ if and only if $xy\neq uv$ and the unique $\{x,y\}$--$\{u,v\}$ path in $T$ is $\mathring{x}yTu\mathring{v}=yTu$. 
\begin{figure}[ht]
\centering
\includegraphics[width=.3\textwidth]{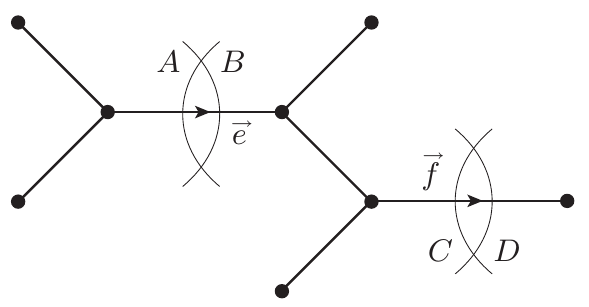}
\caption{An $S$-tree with $\alpha(\ve)=(A,B)\le (C,D)=\alpha(\vf)$.~\cite{DiestelBook5}}
\label{fig:Stree}
\end{figure}

An $S$-\emph{tree} is a pair $(T,\alpha)$ such that $T$ is a tree and $\alpha\colon\vE(T)\to\vS$ propagates the ordering on $\vE(T)$ and commutes with inversion: that $\alpha(\ve)\le\alpha(\vf)$ if $\ve\le\vf\in\vE(T)$ and $(\alpha(\ev))^\ast=\alpha(\ve)$ for all $\ve\in \vE(T)$; see Figure~\ref{fig:Stree}.
A tree-decomposition $(T,\cV)$, for example, makes $T$ into an $S$-tree for the set of separations it induces~\cite{DiestelBook5}*{§12.5}.
Similarly, a tree-cut decomposition $(T,\cX)$ makes $T$ into an $S$-tree for the set of \cutsep s which correspond to its fundamental cuts.

An \emph{isomorphism} between two separation systems is a bijection between their underlying sets that respects both their partial orderings and their involutions.
We need the following fragment of \cite{Kneip}*{Theorem~1} by Gollin and Kneip:

\begin{theorem}\label{thm:Kneip}
Let $G$ be any connected graph, and let $\vS$ be any nested separation system formed by oriented \cutsep s of~$G$.
Then the following assertions are equivalent:
\begin{enumerate}
    \item There exists an $S$-tree $(T,\alpha)$ such that $\alpha\colon\vE(T)\to\vS$ is an isomorphism between separation systems;
    \item $\vS$ contains no chain of order-type~$\omega+1$.
\end{enumerate}
\end{theorem}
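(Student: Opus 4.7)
The forward implication (i)$\Rightarrow$(ii) is easy. An isomorphism $\alpha$ pulls any chain of order type $\omega{+}1$ in $\vS$ back to a strictly increasing chain $(\ve_n)_{n\le\omega}$ in $\vE(T)$ whose underlying edges $e_0,e_1,\ldots,e_\omega$ are pairwise distinct. By definition of the order on $\vE(T)$, each $e_n$ with $0<n<\omega$ must lie on the unique path in~$T$ from $e_0$ to~$e_\omega$; since that path is finite, it cannot contain infinitely many distinct edges---contradiction.

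For the converse (ii)$\Rightarrow$(i) I plan to build $T$ directly from $\vS$ using its \emph{splitting stars}. Call a subset $\sigma\subseteq\vS$ a \emph{star} if $\vr\le\sv$ for any two distinct $\vr,\vs\in\sigma$, and a \emph{splitting star} if it is a maximal star. Take $V(T)$ to be the set of splitting stars, join $\sigma\neq\tau$ by an edge whenever some $s\in S$ has $\vs\in\sigma$ and $\sv\in\tau$, and define $\alpha\colon\vE(T)\to\vS$ by sending the orientation $(\sigma,\tau)$ of that edge to~$\sv$. The plan is then to use Zorn's lemma and nestedness of~$\vS$ to show that every $\vs\in\vS$ lies in some splitting star (giving surjectivity of $\alpha$), to use non-degeneracy of cuts in a connected graph to show that this splitting star is unique (giving injectivity and well-definedness of $\alpha$), to verify commutation with the involution directly from the construction, to verify order-preservation by a short calculation with stars, and finally to check that $T$ is connected and acyclic.

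The main obstacle is ensuring that the resulting $T$ is an honest graph-theoretic tree, without spurious ``limit'' nodes or ends absorbing information from~$\vS$. This is precisely where condition~(ii) enters: a chain $\vs_0<\vs_1<\cdots<\vs_\omega$ in $\vS$, if present, would correspond to an ``$\omega$-ray plus limit edge'' configuration, which no graph-theoretic tree can realise because an edge of a tree sits at only finite combinatorial distance from every other edge. The absence of $\omega{+}1$-chains will therefore be used at exactly the step where one argues that every splitting star of~$\vS$ actually corresponds to a node, rather than arising as an abstract limit of splitting stars along an ascending chain. Once this is in place, connectivity of $T$ follows from nestedness---any two splitting stars are linked by the finitely many separations witnessing their differences---while acyclicity follows from the fact that a cycle in~$T$ would produce a cyclic chain in the poset $\vS$, and the proof of the claimed isomorphism concludes.
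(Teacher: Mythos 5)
The paper does not itself prove Theorem~\ref{thm:Kneip}; it imports it from~\cite{Kneip}, so there is no in-paper argument to compare against. Judged on its own, your (i)$\Rightarrow$(ii) is correct, but your (ii)$\Rightarrow$(i) has a genuine gap: a \emph{maximal} star is not the right notion of ``node''. Take $G$ to be the ray $v_0v_1v_2\cdots$ and let $\vS$ consist of the oriented fundamental cut-separations $(A_n,B_n)$ and $(B_n,A_n)$ with $A_n=\{v_0,\ldots,v_n\}$ and $B_n=V\setminus A_n$. This $\vS$ is nested, has no $\omega{+}1$-chain, and is already the edge tree set of a tree (the ray itself). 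Yet $\{(A_n,B_n),(B_m,A_m)\}$ is a \emph{maximal} star for \emph{every} $m>n$, not just for $m=n+1$: it is a star since $A_n\subset A_m$, and it is maximal because adding $(A_\ell,B_\ell)$ would require $A_\ell\subset B_n$ (false, as $v_0\in A_\ell\setminus B_n$), while adding $(B_\ell,A_\ell)$ would require the infinite set $B_\ell$ to sit inside the finite set $A_m$ (false). Hence $(A_n,B_n)$ lies in infinitely many maximal stars, your map $\alpha$ is ill-defined, and the graph you build is nothing like a tree.

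What you actually need is a star $\sigma$ that \emph{splits}~$\vS$, meaning that every $t\in S$ has an orientation lying below some element of~$\sigma$; the spurious maximal stars above fail to split because they leave $\{A_{n+1},B_{n+1}\}$ unoriented. With the corrected definition, hypothesis~(ii) is used in the \emph{existence} step rather than where you place it: in an $\omega{+}1$-chain $(A_0,B_0)<(A_1,B_1)<\cdots<(A_\omega,B_\omega)$ the orientation $(B_\omega,A_\omega)$ lies in no splitting star at all, and~(ii) precisely rules this out. Even after this repair, the uniqueness of the splitting star containing a given orientation, the order-preservation of~$\alpha$, and the connectedness and acyclicity of~$T$ all still need arguments that your outline only gestures at. As written, the proposal is a programme rather than a proof.
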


\subsection{\texorpdfstring{$\boldsymbol{N_k}$}{Nk} is a set of fundamental cuts}

The following theorem clearly implies that $N_k$ is the set of fundamental cuts of a tree-cut decomposition of $G$ that decomposes $G$ into its $k$-edge-blocks:

\begin{theorem}\label{thm:generalTreePartition}
Let $G$ be any connected graph and $k\in\N$.
Every nested set of bonds of $G$ of order less than~$k$ is the set of fundamental cuts of some \tcd\ of~$G$.
\end{theorem}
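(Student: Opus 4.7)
The plan is to apply \cref{thm:Kneip} to the separation system generated by $M$, and then to read off a \tcd\ from the resulting $S$-tree. Let $\vS$ denote the separation system consisting of all orientations of the \cutsep s that correspond to the bonds in~$M$, with the partial order and involution inherited from the \cutsep s of~$G$. Since $M$ is nested, so is $\vS$, and the first task will be to verify condition~(ii) of \cref{thm:Kneip} for~$\vS$.

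To rule out chains of order-type $\omega+1$: given any hypothetical such chain $\vs_0<\vs_1<\cdots<\vs_\omega$ in~$\vS$ with $\vs_n=(A_n,B_n)$, the sides $A_n$ for $n\in\N$ would form a strictly ascending sequence of sides of bonds of size at most $k-1$, so \cref{cor:noOmegaOrPlusOne} forces $\bigcup_{n\in\N}A_n=V$. Since $\bigcup_{n\in\N}A_n\subseteq A_\omega$, this yields $A_\omega=V$ and hence $B_\omega=\emptyset$, contradicting that $B_\omega$ is a non-empty side of a \cutsep. \cref{thm:Kneip} therefore supplies an $S$-tree $(T,\alpha)$ with $\alpha\colon\vE(T)\to\vS$ an isomorphism of separation systems.

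Next I would turn $(T,\alpha)$ into a \tcd\ by assigning each vertex of~$G$ to a node of~$T$. For every $v\in V$ and every edge $e\in E(T)$, the \cutsep\ corresponding to $\alpha(e)$ is a bipartition of~$V$, so $v$ picks a unique side; via $\alpha^{-1}$ this orients every edge of~$T$, consistently with the tree order because $\alpha$ is an isomorphism. The central claim, and the step I expect to be the main obstacle, is that this orientation points toward a \emph{single} node $t_v$ of~$T$ rather than toward an end of~$T$. If it pointed toward an end, there would be a ray $t_0t_1\cdots$ in~$T$ with each edge $t_it_{i+1}$ oriented from $t_i$ to~$t_{i+1}$, giving a strictly ascending chain $\vs_0<\vs_1<\cdots$ in~$\vS$ with $v$ in the ``forward'' side $B_i$ of each $\vs_i=\alpha((t_i,t_{i+1}))=(A_i,B_i)$; but then $v\notin\bigcup_{n\in\N}A_n$, contradicting $\bigcup_{n\in\N}A_n=V$ which is supplied by \cref{cor:noOmegaOrPlusOne}. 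Setting $X_t:=\{\,v\in V:t_v=t\,\}$ then gives a near-partition of~$V$.

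Finally, I would verify that $(T,(X_t)_{t\in T})$ is the required \tcd. For each edge $e$ of~$T$, write $\alpha((s,t))=(A,B)$; then the vertices $v$ whose $t_v$ lies in the $t$-component of $T-e$ are precisely those with $v\in B$, so the fundamental cut of~$e$ is $E(A,B)$, i.e.\ the bond in~$M$ corresponding to~$e$. Moreover, the non-emptiness of $A$ and~$B$ ensures that both components of $T-e$ contain a node with non-empty bag, which delivers the required density. The heart of the argument is thus the twofold use of \cref{cor:noOmegaOrPlusOne}: once to rule out $(\omega{+}1)$-chains in~$\vS$ so that \cref{thm:Kneip} applies, and once to ensure that each vertex lands in a unique bag.
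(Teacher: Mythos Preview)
Your proposal is correct and follows essentially the same route as the paper: invoke \cref{cor:noOmegaOrPlusOne} to rule out $(\omega{+}1)$-chains so that \cref{thm:Kneip} applies, then orient the edges of $T$ towards each vertex $v$ and use \cref{cor:noOmegaOrPlusOne} again to show this orientation terminates in a sink $t_v$ rather than escaping along a ray. The paper phrases the bag definition as an intersection $X_t=\bigcap\{D\mid (C,D)=\alpha(x,t)\}$ and lets $t$ be the last node of a maximal directed path, but this is the same argument; you are in fact slightly more explicit than the paper in verifying the density requirement on non-empty parts.
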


\begin{proof}
Let $G$ be any connected graph, $k\in\N$, and let $B$ be any nested set of bonds of $G$ of order less than~$k$.
We write $S$ for the set of \bondsep s which correspond to the bonds in~$B$.

First, we wish to use Theorem~\ref{thm:Kneip} to find an $S$-tree $(T,\alpha)$ such that $\alpha\colon\vE(T)\to\vS$ is an isomorphism.
For this, it suffices to show that $B$ cannot contain pairwise distinct bonds $F_0,F_1,\ldots,F_\omega$ such that each bond $F_\alpha$ has a side $A_\alpha$ with $A_\alpha\subsetneq A_\beta$ for all $\alpha<\beta\le\omega$.
This is immediate from Corollary~\ref{cor:noOmegaOrPlusOne}.

Second, we wish to find a \tcd\ $(T,\cX)$ whose fundamental cuts are precisely equal to the bonds in~$B$.
We define the parts $X_t$ of $(T,\cX)$ by letting \[X_t:=\medcap\,\{\,D\mid (C,D)=\alpha(x,t)\text{ where }xt\in E(T)\,\}.\]
Then clearly the parts $X_t$ are pairwise disjoint.
To see that $\bigcup_t X_t$ includes the whole vertex set of $G$, consider any vertex $v\in V(G)$.
We orient each edge $t_1 t_2\in T$ towards the $t_i$ with $v\in D$ for $(C,D)=\alpha(t_{3-i},t_i)$.
By~Corollary~\ref{cor:noOmegaOrPlusOne} we may let $t$ be the last node of a maximal directed path in~$T$; then all the edges of $T$ at $t$ are oriented towards~$t$, and $v\in X_t$ follows.
Therefore, $\cX$ is a near-partition of~$V(G)$.
It is straightforward to see that $B$ is the set of fundamental cuts of~$(T,\cX)$.
\end{proof}

\subsection{\texorpdfstring{$\boldsymbol{N}$}{N} is not a set of fundamental cuts}

Finally, we show that there exists a graph $G$ that has no nested set of cuts which, on the one hand, distinguishes all the edge-blocks of~$G$ efficiently, and on the other hand, is the set of fundamental cuts of some tree-cut decomposition.

\begin{figure}[ht]
    \includegraphics[width=.5\linewidth]{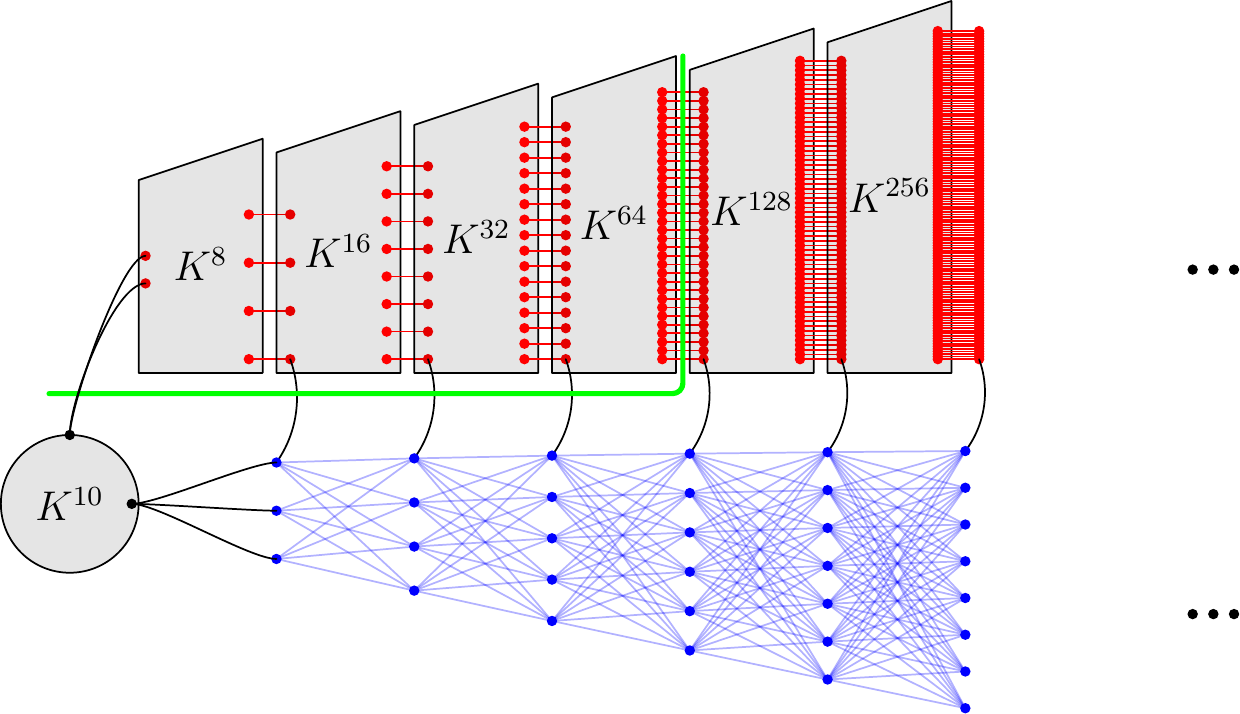}
    \caption{The only cut that efficiently distinguishes the two edge-blocks defined by $K^{64}$ and by $K^{128}$ is drawn in green.}
    \label{fig:no_efficient_td}
\end{figure}
\begin{example}\label{example:BigPicture}
This example is a variation of \cite{InfiniteSplinters}*{Example 4.9}.
Consider the locally finite graph displayed in Figure~\ref{fig:no_efficient_td}. This graph $G$ is constructed as follows.
For every $n\in \N_{\ge 1}$ we pick a copy of $K^{2^{n+2}}$ together with $n+2$ additional vertices $w_1^n,\dots,w_{n+2}^n$. Then we select $2^{n}$ vertices of the $K^{2^{n+2}}$ and call them $u_1^n,\dots,u_{2^n}^n$. 
Furthermore, we select $2^{n+1}$ vertices of the  $K^{2^{n+2}}$, other than the previously chosen $u_i^n$, and call them $v_1^n,\dots,v_{2^{n+1}}^n$. 
Now we add all the red edges $v_i^n u_i^{n+1}$, all the blue edges $w_i^n w_j^{n+1}$, and if $n\ge 2$ we also add the black edge $u_1^n w_1^n$.
Finally, we disjointly add one copy of $K^{10}$ and join one vertex $v_1^0$ of this $K^{10}$ to $u_1^1$ and $u_2^1$; and we select another vertex $w_1^0\in K^{10}$ distinct from $v_1^0$ and add all edges $w_1^0 w_i ^1$.
This completes the construction.

Now the vertex sets of the chosen $K^{2^{n+2}}$ are $(2^{n+2} - 1)$-edge-blocks~$B_n$. 
The only \cutsep\ that efficiently distinguishes $B_n$ and $B_{n+1}$ is $F_n:=\{\,\bigcup_{k=1}^n B_n\,,\,V\setminus \bigcup_{k=1}^n B_n\,\}$.
Additionally, the vertex set of the $K^{10}$ is a $9$-edge-block~$B_0$. The only \cutsep\ that efficiently distinguishes $B_0$ and $B_1$ is $F_0:=\{B_0,V\setminus B_0\}$.
Therefore, $N(G)$ must contain all the cuts corresponding to the \cutsep s~$F_n$~($n\in\N$).
But the \cutsep s $F_n$ define an $(\omega+1)$-chain
\[
    (B_1,V\setminus B_1)<(B_1\cup B_2,V\setminus (B_1\cup B_2))<\cdots<(V\setminus B_0,B_0),
\]
so $N(G)$ cannot be equal to the set of fundamental cuts of a tree cut-decomposition of~$G$ by Theorem~\ref{thm:Kneip}. 
\end{example}

\section{Generating all bonds}\label{sec:DD}

\noindent A set $S$ of \cutsep s \emph{generates} a cut $\{X,Y\}$ if there exists a finite subset $\{\,\{A_k,B_k\}\mid k<n\,\}\subset S$ such that
\[
    \{X,Y\}=\{\,\medcup_{k<n}A_k\,,\, \medcap_{k<n}B_k\,\}.
\]
If $S$ generates $\{X,Y\}$, then the cuts corresponding to the \cutsep s in $S$ \emph{generate} the cut corresponding to~$\{X,Y\}$.
Note that $S$ generates $\{X,Y\}$ if and only if both $(X,Y)$ and $(Y,X)$ can be obtained from finitely many oriented \cutsep s in~$\vS$ by taking suprema and infima, where 
\begin{itemize}
    \item $(A,B)\vee (A',B'):=(A\cup A',B\cap B')$ is the \emph{supremum} and
    \item $(A,B)\wedge (A',B'):=(A\cap A',B\cup B')$ is the \emph{infimum}
\end{itemize}
of two \cutsep s $(A,B)$ and $(A',B')$.
In this section we prove our second main result:

\begin{customthm}{\ref{thm:generate}}
Let $G$ be any connected graph and let $M$ be any nested set of bonds of~$G$. 
Then the following assertions are equivalent:
\begin{enumerate}
    \item $M$ efficiently distinguishes all the edge-blocks of~$G$;
    \item For every $k\in\N$, the ${\le} k$-sized bonds in~$M$ generate all the $k$-sized cuts of~$G$.
\end{enumerate}
\end{customthm}

\noindent For the proof, we need a generalised version of the star-comb lemma~\cite{DiestelBook5}*{Lemma~8.2.2}.
A \emph{comb} in a given graph $G$ means one of the following two substructures of~$G$:
\begin{enumerate}
    \item The union of a ray $R$ (the comb's \emph{spine}) with infinitely many disjoint finite paths, possibly trivial, that have precisely their first vertex on~$R$. 
The last vertices of those paths are the \emph{teeth} of this comb.
\item The union of a ray $R$ (the comb's \emph{spine}) with infinitely many disjoint pairwise inequivalent rays $R_0,R_1,\ldots$ that have precisely their first vertex on~$R$.
The ends to which the rays $R_0,R_1,\ldots$ belong are the \emph{teeth} of this comb.
\end{enumerate}
Given a set~$U\subset V(G)\cup\Omega(G)$, a \emph{comb attached to} $U$ is a comb with all its teeth in $U$.
A \emph{star attached to}~$U$ is either a subdivided infinite star with all its leaves in $U$, or a union of infinitely many rays that meet precisely in their first vertex and belong to distinct ends in~$U$.

\begin{lemma}[Generalised star-comb lemma]
Let $U\subset V(G)\cup\Omega(G)$ be an infinite set for a connected graph $G$.
Then $G$ contains either a comb \at $U$ or a star \at $U$.
\end{lemma}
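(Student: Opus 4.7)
My plan is to reduce to the classical star-comb lemma by case analysis.

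The first case is that $U\cap V(G)$ is infinite; then the classical star-comb lemma applied to $U\cap V(G)\subset V(G)$ directly gives a comb of type~(i) or a star of the first kind attached to $U$. So assume instead that $U\cap\Omega(G)$ is infinite, and fix pairwise distinct ends $\omega_0,\omega_1,\ldots\in U\cap\Omega(G)$. The plan then is fourfold: first, construct pairwise disjoint rays $Q_n$ with $Q_n\in\omega_n$; second, apply the classical lemma to a vertex chosen from each $Q_n$; third, extend the resulting vertex-teeth to end-teeth using the $Q_n$-tails; and finally, clean the structure up to produce pairwise disjoint end-tooth rays meeting the spine or centre only at the initial vertex.

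For the first step I build the $Q_n$ inductively. Given disjoint $Q_0,\ldots,Q_{n-1}$, the end $\omega_n$ can be separated from $\omega_0,\ldots,\omega_{n-1}$ by a finite vertex set $F$, so $\omega_n$ lies in a component $C$ of $G-F$ disjoint from the components containing the earlier $\omega_k$; hence $C$ meets $Q_0\cup\cdots\cup Q_{n-1}$ in only finitely many vertices. Any ray in $\omega_n$ has a tail in $C$, which can be truncated past these finitely many conflicts to give $Q_n$. For the second step, I pick distinct vertices $t_n\in V(Q_n)$ and apply the classical star-comb lemma to $T:=\{t_n:n\in\mathbb{N}\}$, obtaining a star with centre $c$ or a comb with spine $S$, together with pairwise internally disjoint finite paths $P_j$ from the centre/spine to distinct teeth $t_{n_j}\in T$. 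For the third step, the finite path $P_j$ concatenated with the $t_{n_j}$-tail of $Q_{n_j}$ forms a walk from the centre/spine to $\omega_{n_j}$; since $P_j$ is finite, so is $P_j\cap Q_{n_j}$, and pruning the walk at the first common vertex of $P_j$ (seen from the centre/spine) and the $Q_{n_j}$-tail yields a genuine ray ending at $\omega_{n_j}$.

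The main obstacle is the final step: making these end-tooth rays mutually disjoint and meeting the spine only at their initial vertex. Pairwise disjointness of the $Q_n$ already handles conflicts between distinct $Q$-tails. The remaining conflicts are $P_j$ meeting $Q_{n_{j'}}$ for $j\neq j'$, and $S$ meeting $Q_{n_j}$; both are finite (the former because $P_j$ is finite and the $Q_n$ are pairwise disjoint, the latter because $S$ lies in a single end, which we may discard from the~$\omega_{n_j}$). I therefore resolve these conflicts by passing to an infinite subsequence $j_1<j_2<\cdots$ and iteratively truncating each new tooth-ray past its finitely many intersections with the previously chosen structure. This yields the required comb of type~(ii), or a star of the second kind when the classical output was a star.
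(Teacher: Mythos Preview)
Your overall strategy matches the paper's: handle the case of infinitely many vertices directly, and for infinitely many ends choose pairwise disjoint representative rays, apply the classical lemma to one vertex from each, and then extend the vertex-teeth to end-teeth. The difference is that the paper, after choosing the disjoint rays~$R_\omega$, passes to an inclusionwise minimal tree $T\subset G$ extending all of them and runs the classical star--comb lemma inside the minimal subtree $T'\subset T$ spanned by their first vertices. Working in a tree makes the extension step essentially automatic: in~$T$ any two rays meet in at most a path, the pairwise disjoint rays $R_\omega$ already lie in~$T$, and so the star or comb found in~$T'$ extends cleanly to one in~$T$ whose teeth are the ends~$\omega$.

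Your cleanup step, by contrast, has a genuine gap. You propose to ``resolve these conflicts by passing to an infinite subsequence $j_1<j_2<\cdots$ and iteratively truncating each new tooth-ray past its finitely many intersections with the previously chosen structure.'' But a tooth-ray of a type-(ii) comb must have its \emph{first} vertex on the spine~$S$, and a tooth-ray of a star must start at the centre~$c$. If the ``previously chosen structure'' includes~$S$ (or~$c$), then truncating the tooth-ray past its intersections with~$S$ moves its starting vertex off~$S$---typically to some vertex of~$Q_{n_j}$, which need not meet~$S$ at all beyond that point---and the comb structure is destroyed; in the star case truncation can never recover a ray starting at~$c$. If instead you truncate only past conflicts with previously chosen tooth-rays, the finite set $Q_{n_j}\cap S$ is still there, so the tooth-ray still meets~$S$ in more than its first vertex. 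A greedy selection of indices $j_k$ whose full rays $R_{j_k}$ happen to avoid all conflicts does not work either, since a single ray $Q_{n_{j_1}}$ can meet infinitely many of the finite tooth-paths~$P_j$. The issue is real and requires a more careful argument; the paper's device of first passing to a tree is precisely what sidesteps it.
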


\begin{proof}
If $U$ contains infinitely many vertices of~$G$, then we are done by the standard star-comb lemma~\cite{DiestelBook5}*{Lemma~8.2.2}.
Hence we may assume that $U$ consists of ends and, say, is countable.
Inductively, we choose for each end $\omega\in U$ a ray $R_\omega\in\omega$ so that $R_\omega$ is disjoint from all previously chosen rays, ensuring that all chosen rays are pairwise disjoint, and we let $U'$ consist of the first vertices of these rays.
Then we consider an inclusionwise minimal tree $T\subset G$ that extends all the rays $R_\omega$ with $\omega\in U$.
Let $T'\subset T$ be the inclusionwise minimal subtree that contains~$U'$.
Then, by the standard star-comb lemma, $T'$ contains either a star or a comb attached to $U'$, and either extends to a star or comb attached to~$U$.
\end{proof}

\noindent For more on stars and combs, see the series~\cites{StarComb1StarsAndCombs,StarComb2TheDominatedComb,StarComb3TheUndominatedComb,StarComb4TheUndominatingStar}.

\begin{proof}[Proof of Theorem~\ref{thm:generate}]
(ii)$\to$(i) 
Let $M$ be any nested set of bonds of~$G$ such that, for every $k\in\N$, the ${\le} k$-sized bonds in~$M$ generate all the $k$-sized cuts of~$G$, and suppose for a contradiction that there are two edge-blocks $\beta_1,\beta_2$ which are not efficiently distinguished by any bond in~$M$. 
Let $\{X,Y\}$ be some \bondsep\ which efficiently distinguishes $\beta_1$ and $\beta_2$, and let $k$ be its order. 
Let $\{\,\{A_\ell,B_\ell\}\mid \ell<n\,\}$ be a finite set of ${\le} k$-sized bonds in $M$ which generate $\{X,Y\}$ so that $\{X,Y\}=\{\,\bigcup_{\ell<n}A_\ell\,,\,\bigcap_{\ell<n}B_\ell\,\}$. 
Since $M$ does not efficiently distinguish $\beta_1$ from $\beta_2$, for every $\ell<n$ we either have that both $\beta_1$ and $\beta_2$ live in~$A_\ell$, or that both of them live in $B_\ell$. However, this implies that either both $\beta_1$ and $\beta_2$ live in $X$, or that both of them live in $Y$, contradicting the fact that $\{X,Y\}$ distinguishes $\beta_1$ and $\beta_2$.

(i)$\to$(ii) We assume~(i).
It suffices to prove (ii) for finite bonds.
Let $B=E(V_1,V_2)$ be any bond of~$G$ of size~$k$, say.
By Theorem~\ref{thm:generalTreePartition}, the set formed by the ${\le}k$-sized bonds in~$M$ is the set of fundamental cuts of a tree-cut decomposition~$(T,\cX)$ of~$G$.
Write $(T,\alpha)$ for the $S$-tree that arises from~$(T,\cX)$.

Since $B$ is finite, only finitely many parts of $(T,\cX)$ contain endvertices of edges in~$B$. 
We let $H$ be the minimal subtree of $T$ which contains all the nodes corresponding to these parts.
Note that $H$ is finite.
Then we let $H'$ be the subtree of $T$ which is induced by the nodes of $H$ and all their neighbours in~$T$.
The subtree $H'$ might be infinite, but it is rayless.
Let $\cH$ be the tree-cut decomposition of $G$ which corresponds to the $S$-tree $(H',\,\alpha\rest\vE(H'))$.

We claim that every two edge-blocks of $G$ that are distinguished by $B$ are also distinguished by some fundamental cut of $\cH$.
For this, let $\beta_1\lives V_1$ and $\beta_2\lives V_2$ be any two edge-blocks of~$G$ that are distinguished by~$B$.
Then $\beta_1$ and $\beta_2$ are also distinguished by a ${\le k}$-sized bond in~$M$, and hence some fundamental cut of $(T,\cX)$ distinguishes $\beta_1$ and $\beta_2$ as well.
Let $st$ be an edge of $T$ whose induced fundamental cut distinguishes $\beta_1$ and $\beta_2$, chosen at minimal distance to $H'$ in~$T$.
Then $\beta_1$ lives in $C$ and $\beta_2$ lives in $D$ for $(C,D)=\alpha(s,t)$, say.
We claim that $st$ is also an edge of~$H'$, and assume for a contradiction that it is not.
Then~$s$, say, is not a vertex of $H'$ and $t$ lies on the $s$--$H'$ path in~$T$.
Since $\{C,D\}$ is an element of~$M$, it is a bond and in particular $G[C]$ is connected.
Moreover, $C$ avoids the endvertices of the edges in~$B$, because $t$ separates $s$ from $H$.
Therefore, $C$ is included in one of the two sides of~$B$, say in~$V_1$, so $\beta_1$ lives in~$V_1$.
The node $t$, however, cannot lie in $H$ because this would imply $s\in H'$, so $t$ has a neighbour $u$ in $T$ which separates $t$ (and $s$) from~$H$.
Let $(C',D'):=\alpha(t,u)$.
Since $s$ and $u$ are distinct neighbours of~$t$, we have $(C,D)\le (C',D')$.
As argued for $(C,D)$, we find that $C'$ must be included in one of the two sides of~$B$, and this side must be $V_1$ since $C$ is included in both $V_1$ and $C'$.
By the choice of $st$ at minimal distance to~$H'$, the edge-block $\beta_2$ must live in~$C'$ (or we could replace $st$ with~$tu$, contradicting the choice of~$st$).
But then both $\beta_1$ and $\beta_2$ live in~$V_1$, the desired contradiction.

We replace $(T,\cX)$ with $\cH$. Then:
\medskip
\begin{fleqn}%
\begin{equation*}%
\tag{$\ast$}%
\hspace{2\parindent}\begin{aligned}\label{Bdist}%
    \parbox{\textwidth-5\parindent}{Every two edge-blocks of $G$ that are distinguished by $B$ are also distinguished by some fundamental cut of~$(T,\cX)$.}%
\end{aligned}%
\end{equation*}%
\end{fleqn}%

Given a node $t\in T$, we denote by $\hat{X}_t$ the subset of $\hat{V}(G)$ 
which is the union of all the $(k+1)$-edge-blocks of $G$ that live in $D$ for all \cutsep s $(C,D)=\alpha(s,t)$ with $(s,t)\in\vE(T)$.
Then $\hat{X}_t\cap V(G)=X_t$ and we call $\hat{X}_t$ the \emph{extended part} of $t$.
Note that extended parts of distinct nodes are disjoint.
Since $T$ is rayless, the extended parts near-partition~$\hat{V}(G)$.
As an immediate consequence of~(\ref{Bdist}), every extended part of $(T,\cX)$ lives either in~$V_1$ or $V_2$.

We colour the nodes of~$T$ using red and blue, as follows.
We colour a node $t\in T$ red if $\hat{X}_t$ is non-empty and $\hat{X}_t\lives V_1$.
Similarly, we colour a node $t\in T$ blue if $\hat X_t$ is non-empty and $\hat X_t\lives V_2$. 
Finally, we consider all the nodes $t\in \hat T$ with $\hat X_t=\emptyset$.
These induce a forest in~$T$.
We colour all the nodes in a component of this forest red if the component has a red neighbour, and blue otherwise.

We let $T_1\subset T$ be the forest induced by the red nodes, and we let $T_2\subset T$ be the forest induced by the blue nodes.
The way in which we coloured the nodes with empty extended parts ensures that, for every connected component $C$ of~$T_1$ or of~$T_2$, 
 some node $t\in C$ has a non-empty extended part $\hat X_t$.
Note that $B=E(\,\bigcup_{t\in T_1} X_t\,,\,\bigcup_{t\in T_2} X_t\,)$ by the definition of $T_1$~and~$T_2$.
We claim that we are done if $T$ contains only finitely many $T_1$--$T_2$ edges.
Indeed, if $s_0 t_0,\ldots, s_n t_n$ are the finitely many $T_1$--$T_2$ edges with $s_\ell\in T_1$ and $t_\ell\in T_2$, then
\begin{align*}
    (V_1,V_2)=\;\bigwedge_{C\text{: a component of }T_2}\quad\bigvee_{\ell\text{: }t_\ell\in C} \;\;\alpha(s_\ell,t_\ell)\;.
\end{align*}
Thus, it remains to show that $T$ contains only finitely many $T_1$--$T_2$ edges.
For this, we consider the tree $\tilde{T}$ that arises from $T$ by contracting every component of $T_1$ and every component of~$T_2$ to a single node. 
Since $T$ is rayless, so is~$\tilde{T}$.
By K\H{o}nig's lemma, it remains to show that $\tilde{T}$ is locally finite.

Suppose for a contradiction that $d\in\tilde{T}$ is a vertex that has some infinitely many neighbours $c_n$ ($n\in\N$).
Recall that all the sets $Y_c:=\bigcup\,\{\,\hat{X_t}\mid t\in c\,\}$ where $c$ is a node of $\tilde{T}$ are non-empty.
We choose one point $u_n\in Y_{c_n}$ for every~$n\in\N$, and we apply the star-comb lemma in the connected side~$G[V_i]$ of~$B$ where all sets~$Y_{c_n}$ live to the infinite set $U:=\{\,u_n\mid n\in\N\,\}$.
Then we cannot get a star, because the finite fundamental cuts of $(T,\cX)$ induced by its $T_i$--$d$ edges would force the centre vertex to lie in~$Y_d$, contradicting the fact that $Y_d$ lives in $V_{3-i}$.
Therefore, the star-comb lemma must return a comb contained in~$G[V_i]$ and attached to~$U$. Without loss of generality, each $u_n$ is a tooth of this comb.

Let us consider the end of~$G$ that contains the spine of the comb.
This end is contained in a $(k+1)$-edge-block $\beta\lives V_i$.
And $\beta$ in turn is included in a set $Y_c$ where $c$ is a component of~$T_i$.
Hence $c\neq d$.
But then the fundamental cut of $(T,\cX)$ which corresponds to the $T_i$--$d$ edge on the $c$--$d$ path in $T$ separates a tail of the comb from infinitely many~$u_n$, a contradiction.
\end{proof}

\section{Finitely separating spanning trees and \texorpdfstring{$\infty$}{infinity}-edge-blocks}\label{sec:infEdgeBlocks}

\noindent By the second property of our nested set $N(G)$, we find a tree-cut decomposition of any connected graph~$G$ into its $k$-edge-blocks, one for every $k\in\N$.
But for $k=\infty$, such a decomposition does not in general exist, e.g., consider Example~\ref{example:BigPicture} with each $K^n$ of the graph replaced by $K^{\aleph_0}$ (or any other infinitely edge-connected graph).
The reason why, however, is not that there are no meaningful tree-cut decompositions of $G$ into its $\infty$-edge-blocks, but that we considered only those decompositions whose sets of fundamental cuts are equal to~$N(G)$.
If we drop this requirement, then we find tree-cut decompositions of $G$ into its $\infty$-edge-blocks, meaningful in the sense that all their fundamental cuts are finite.
Let us call a graph \emph{finitely separable} if any two of its vertices can be separated by finitely many edges.
And let us call a spanning tree, respectively a tree-cut decomposition, \emph{finitely separating} if all its fundamental cuts are finite.
The following theorem has been introduced in~\cite{StarComb3TheUndominatedComb} as Theorem~3.9, and it is Theorem~5.1 in~\cite{TypicalInfinitelyEdgeconnectedGraphs}:

\begin{theorem}[\,\cite{StarComb3TheUndominatedComb}\,]
Every finitely separable connected graph has a finitely separating spanning tree.
\end{theorem}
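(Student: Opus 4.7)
The plan is a transfinite recursion along a well-ordering $(v_\alpha)_{\alpha<\kappa}$ of $V(G)$, constructing a chain of subtrees $T_\alpha\subseteq G$ with $v_\beta\in V(T_\alpha)$ for all $\beta<\alpha$, together with a chain of assignments $\psi_\alpha\colon E(T_\alpha)\to\{\text{finite bonds of }G\}$. The invariant driving the whole construction is: (II) for every edge $e\in E(T_\alpha)$, the vertex sets of the two components of $T_\alpha-e$ lie on distinct sides of the partition of $V(G)$ induced by $\psi_\alpha(e)$. Since $T_\kappa$ spans $G$, at stage $\kappa$ the invariant forces each fundamental cut of $T_\kappa$ to coincide with the finite bond $\psi_\kappa(e)$, yielding the desired finitely separating spanning tree.

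At a successor step $\alpha\to\alpha+1$ with $v_\alpha\notin V(T_\alpha)$, the plan is to pick a shortest $v_\alpha$--$V(T_\alpha)$ path $P$ in $G$ and adjoin $E(P)$ to $T_\alpha$. Each new edge $xy\in E(P)$ needs a finite witness bond: finite separability supplies a first candidate bond separating $x$ from $y$, and this candidate is then realigned to become nested with each of the finitely many existing $\psi_\alpha$-values it crosses. The realignment uses the corner construction from \cref{sec:keytool}, iteratively replacing a crossing candidate by an appropriate corner against each earlier witness; each step preserves finite order and the property of separating $x$ from $y$, so the process terminates with a witness meeting all the nestedness requirements. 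Limit stages are taken as unions, and since every $\psi_\beta(e)$ is permanent from the moment the edge $e$ is committed, both invariants pass to unions.

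The hard part is ensuring that the corner-realignment keeps invariant~(II) intact for the earlier edges: when a candidate witness $B$ is replaced by a corner $B'$ of $B$ and an earlier witness $\psi_\alpha(e')$, one must check that no already-placed vertex of $T_\alpha$ ends up on the wrong side of $B'$ relative to another tree edge, and that the endpoints $x,y$ remain on opposite sides. I expect this to be tractable by combining finite separability with the corner analysis of \cref{lem:splinter_2} and \cref{lem:splinter_3}, restricting attention at each stage to the (finitely many) existing witnesses whose cuts actually separate $v_\alpha$ from $V(T_\alpha)$ and aligning against those one at a time. The successor step is then a finite problem about a finite collection of nested bonds — exactly the setting for which the splinter machinery of \cref{sec:MainProof} was developed — so one has enough room to realign without spoiling the commitments made earlier.
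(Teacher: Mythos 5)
The paper does not actually prove this statement---it is quoted from \cite{StarComb3TheUndominatedComb}*{Theorem~3.9} and \cite{TypicalInfinitelyEdgeconnectedGraphs}*{Theorem~5.1}---so there is no in-paper proof to compare against; I evaluate your proposal on its own terms.

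Your reduction of the conclusion to invariant~(II) is sound (two bipartitions of $V(G)$ one of which refines the other coincide), and the corner trick does produce, for each new edge of $P$, a finite bond nested with the finitely many old witnesses it crosses. But that is the wrong obstruction. The thing you flag as ``the hard part''---keeping invariant~(II) intact for the \emph{old} edges $e'$ when $P$ is adjoined---is not touched by realigning the \emph{new} witness bonds. What (II) for $e'$ actually demands is that every newly added vertex of $P$ lie on the same side of $\psi_\alpha(e')=\{A',B'\}$ as the $T_\alpha$-component of $T_\alpha-e'$ that $P$ attaches to. Choosing $P$ as a shortest $v_\alpha$--$T_\alpha$ path does not ensure this: if $v_\alpha\in B'$ but $v_\alpha$'s nearest $T_\alpha$-vertex $w$ lies in the component $K_1\subset A'$, the edge of $P$ leaving $w$ already violates~(II) for $e'$, and no amount of making the \emph{new} bonds nested with $\psi_\alpha(e')$ repairs this. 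Lemmas~\ref{lem:splinter_2} and \ref{lem:splinter_3} concern corners of two crossing bonds; they say nothing about where a connecting path runs.

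One natural attempt at a repair---route $P$ so that it stays inside the unique region of the nested cut-structure containing $v_\alpha$, attaching to the $T_\alpha$-vertex dictated by all the old constraints simultaneously---hits two further problems you would need to solve: that region (an intersection of sides of bonds) need not induce a connected subgraph, and it need not contain any vertex of $T_\alpha$ at all (after a limit stage the nested system may have ``limit regions'' with empty bags, exactly the phenomenon behind Example~\ref{example:BigPicture}). The alternative of retroactively changing an old $\psi_\beta(e')$ to accommodate $P$ contradicts the permanence you rely on to pass the invariants through limit stages. So as written the successor step is not a ``finite problem about nested bonds'' that the splinter machinery settles; a genuinely new idea is needed to control where $P$ goes, and the proposal does not supply it.
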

\noindent If $G$ is any connected graph, then the graph $\tilde{G}$ obtained from $G$ by collapsing every $\infty$-edge-block to a single vertex is finitely separable and connected.
Hence $\tilde{G}$ has a finitely separating spanning tree by the theorem, and this tree is easily translated to a finitely separating tree-cut decomposition of~$G$, even with all parts non-empty:
\begin{theorem}[\,\cite{TypicalInfinitelyEdgeconnectedGraphs}\,]
Every connected graph has a finitely separating tree-cut decomposition into its \mbox{$\infty$-edge-blocks}.\qed
\end{theorem}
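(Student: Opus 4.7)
The plan is to follow the construction sketched just before the theorem, paying attention to the (admittedly marginal) possibility that some $\infty$-edge-blocks of $G$ consist entirely of ends and therefore contribute no vertex to $\tilde{G}$.

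First, I would build $\tilde{G}$ carefully. Let $\cB$ denote the set of $\infty$-edge-blocks $B$ of $G$ with $B\cap V(G)\neq\emptyset$, and obtain $\tilde{G}$ from $G$ by identifying, for each $B\in\cB$, all vertices in $B\cap V(G)$ to a single vertex $\tilde{v}_B$, discarding the loops so created while keeping parallel edges. Since $G$ is connected, $\tilde{G}$ is connected. To see that $\tilde{G}$ is finitely separable, consider two distinct vertices $\tilde{v}_B, \tilde{v}_{B'}$ of $\tilde{G}$. As $B\neq B'$, the blocks are not $({<}\infty)$-inseparable in $G$, so some finite edge set $F\subset E(G)$ separates $B$ from $B'$. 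Since each $\infty$-edge-block of $G$ remains connected after the deletion of any finite edge set, I may replace $F$ by the sub-set of its edges joining two distinct $\infty$-edge-blocks, which still separates $B$ from $B'$; this trimmed $F$ survives the identification, so its image in $\tilde{G}$ is a finite edge set separating $\tilde{v}_B$ from $\tilde{v}_{B'}$.

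Next, by the preceding theorem from~\cite{StarComb3TheUndominatedComb}, $\tilde{G}$ admits a finitely separating spanning tree $\tilde{T}$, and I would translate it into a tree-cut decomposition $(T,\cX)$ of $G$ by setting $T:=\tilde{T}$ and $X_{\tilde{v}_B}:=B\cap V(G)$ for every node $\tilde{v}_B$ of~$T$. Since the $\infty$-edge-blocks partition $\hat{V}(G)$, the family $\cX$ is a partition of $V(G)$ into non-empty parts, so the density condition on non-empty parts is trivially met. By construction the non-empty parts of $(T,\cX)$ are exactly the vertex sets of the $\infty$-edge-blocks of $G$, so $(T,\cX)$ decomposes $G$ into its $\infty$-edge-blocks. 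Finally, for every edge $\tilde{u}\tilde{v}\in T$ the fundamental cut of $(T,\cX)$ at $\tilde{u}\tilde{v}$ consists precisely of the edges of $G$ whose images form the fundamental cut of $\tilde{u}\tilde{v}$ in $\tilde{T}$, which is finite; hence $(T,\cX)$ is finitely separating.

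The only genuine subtlety, and therefore the main thing to verify carefully, is that ``end-only'' $\infty$-edge-blocks of $G$ cause no trouble in the passage between $G$ and $\tilde{G}$. This is handled by the single observation that every $\infty$-edge-block of $G$ stays connected under deletion of any finite edge set, which lets us translate finite separations between $G$ and $\tilde{G}$ in both directions without interference from ends.
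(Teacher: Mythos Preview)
Your proposal is correct and follows exactly the route the paper sketches in the sentence preceding the theorem: collapse each $\infty$-edge-block to a point, invoke the finitely separating spanning tree theorem for the resulting finitely separable graph $\tilde{G}$, and lift the tree back to a tree-cut decomposition of~$G$. Your added care about end-only $\infty$-edge-blocks is harmless but largely unnecessary---they contribute no vertices and hence play no role in either $\tilde{G}$ or the parts of the decomposition---and your ``trimming'' step can be shortened by noting that any finite cut of $G$ automatically has every $\infty$-edge-block on one side, so none of its edges become loops in~$\tilde{G}$.
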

\noindent This result, phrased in terms of $S$-trees, is extensively used in~\cite{TypicalInfinitelyEdgeconnectedGraphs} to study infinite edge-connectivity.

\begin{bibdiv}
\begin{biblist}

\bib{StarComb1StarsAndCombs}{article}{
    title={{Duality theorems for stars and combs I: Arbitrary stars and combs}},
    author={C.~Bürger and J.~Kurkofka},
    year={2020},
    note={To appear in Journal of Graph Theory},
    eprint={2004.00594},
}

\bib{StarComb2TheDominatedComb}{article}{
    title={{Duality theorems for stars and combs II: Dominating stars and dominated combs}},
    author={C.~Bürger and J.~Kurkofka},
    year={2020},
    eprint={2004.00593}
}

\bib{StarComb3TheUndominatedComb}{article}{
    title={{Duality theorems for stars and combs III: Undominated combs}},
    author={C.~Bürger and J.~Kurkofka},
    year={2020},
    eprint={2004.00592}
}

\bib{StarComb4TheUndominatingStar}{article}{
    title={{Duality theorems for stars and combs IV: Undominating stars}},
    author={C.~Bürger and J.~Kurkofka},
    year={2020},
    eprint={2004.00591}
}

\bib{short}{article}{
    title = {A short proof that every finite graph has a tree-decomposition displaying its tangles},
    journal = {European Journal of Combinatorics},
    volume = {58},
    pages = {61--65},
    year = {2016},
    doi = {10.1016/j.ejc.2016.04.007},
    author = {J.~Carmesin},
    review={\MR{3530620}},
    eprint={1511.02734}
}

\bib{CarmesinOne}{article}{
    title = {{Canonical tree-decompositions of finite graphs I.~Existence and algorithms}},
    journal = {J.~Combin.\ Theory (Series~B)},
    volume = {116},
    pages = {1--24},
    year = {2016},
    doi = {10.1016/j.jctb.2014.04.001},
    author = {{J.~Carmesin, R.~Diestel, M.~Hamann and F.~Hundertmark}},
    review={\MR{3425235}},
    eprint={1305.4668}
}

\bib{CarmesinTwo}{article}{
    title = {{Canonical tree-decompositions of finite graphs II.~Essential parts}},
    journal = {J.~Combin.\ Theory (Series~B)},
    volume = {118},
    pages = {268--283},
    year = {2016},
    doi = {10.1016/j.jctb.2014.12.009},
    author = {{J.~Carmesin, R.~Diestel, M.~Hamann and F.~Hundertmark}},
    review={\MR{3471852}},
    eprint={1305.4909}
}

\bib{kBlocks}{article}{
    author = {{J.~Carmesin, R.~Diestel, M.~Hamann and F.~Hundertmark}},
    title = {{$k$-Blocks: A Connectivity Invariant for Graphs}},
    journal = {SIAM Journal on Discrete Mathematics},
    volume = {28},
    number = {4},
    pages = {1876-1891},
    year = {2014},
    doi = {10.1137/130923646},
    eprint = {1305.4557}
}

\bib{ConnectivityTreeStructure}{article}{
    author = {{J.~Carmesin, R.~Diestel, F.~Hundertmark and M.~Stein}},
    title = {{Connectivity and tree structure in finite graphs}},
    year = {2014},
    volume = {34},
    number = {1},
    doi = {10.1007/s00493-014-2898-5},
    journal = {Combinatorica},
    pages = {11--46},
    review={\MR{3213840}},
    eprint={1105.1611}
}

\bib{carmesin2020canonical}{article}{
      title={Canonical trees of tree-decompositions}, 
      author={{J.~Carmesin, M.~Hamann and B.~Miraftab}},
      year={2020},
      eprint={2002.12030}
}

\bib{DD}{book}{
  title={{Groups Acting on Graphs}},
  author={W.~Dicks and M.J.~Dunwoody},
  isbn={9780521230339},
  series={Cambridge Studies in Advanced Mathematics},
  year={1989},
  publisher={Cambridge University Press},
  review={\MR{1001965}}
}

\bib{AbstractSepSys}{article}{
    author = {R.~Diestel},
    title = {{Abstract Separation Systems}},
	journal = {Order},
    year = {2018},
	volume = {35},
	number = {1},
	pages = {157--170},
	eprint={1406.3797v6},
	doi={10.1007/s11083-017-9424-5},
	review={\MR{MR3774512}}
}

\bib{DiestelBook5}{book}{
	author = {R.~Diestel},
	edition = {5th},
	publisher = {Springer},
	title = {{Graph Theory}},
	year = {2016},
	doi = {10.1007/978-3-662-53622-3}
}

\bib{RhdTreeSets}{article}{
    author = {R.~Diestel},
    title = {{Tree Sets}},
	journal = {Order},
	eprint={1512.03781},
	doi={10.1007/s11083-017-9425-4},
    year = {2018},
	volume = {35},
	number = {1},
	pages = {171--192}
}

\bib{BlockTangleDuality}{article}{
    author = {{R.~Diestel, P.~Eberenz and J.~Erde}},
    year = {2017},
    pages = {1514--1528},
    title = {{Duality Theorems for Blocks and Tangles in Graphs}},
    volume = {31},
    journal = {SIAM Journal on Discrete Mathematics},
    doi = {10.1137/16M1077763},
    eprint={1605.09139},
    review={\MR{3670715}}
}

\bib{StructuralTT}{article}{
    eprint={1805.01439},
    title = {Structural submodularity and tangles in abstract separation systems},
    journal = {J.~Combin.\ Theory (Series~A)},
    volume = {167},
    pages = {155--180},
    year = {2019},
    doi={10.1016/j.jcta.2019.05.001},
    author={{R.~Diestel, J.~Erde and D.~Weißauer}},
    review={\MR{3950083}}
}

\bib{TTDAbstract}{article}{
    title = {Tangle-tree duality in abstract separation systems},
    journal = {Advances in Mathematics},
    volume = {377},
    pages = {107470},
    year = {2021},
    doi = {10.1016/j.aim.2020.107470},
    author = {R.~Diestel and S.~Oum},
    eprint={1701.02509}
}

\bib{TTDgraphs}{article}{
    author = {{R.~Diestel and S.~Oum}},
    year = {2019},
    pages = {879–-910},
    title = {{Tangle-Tree Duality: In Graphs, Matroids And Beyond}},
    volume = {39},
    journal = {Combinatorica},
    doi = {10.1007/s00493-019-3798-5},
    eprint={1701.02651},
    review={\MR{4015355}}
}

\bib{MonaLisa}{article}{
    title={{Tangles and the Mona Lisa}},
    author = {R.~Diestel and G.~Whittle},
    eprint={1603.06652},
    year={2016}
}

\bib{ProfilesNew}{article}{
	author = {{R.~Diestel, F.~Hundertmark and S.~Lemanczyk}},
	journal = {Combinatorica},
	number = {1},
	pages = {37--75},
	title = {Profiles of separations: in graphs, matroids, and beyond},
	volume = {39},
	year = {2019},
	doi={10.1007/s00493-017-3595-y},
	eprint={1110.6207},
	review={\MR{3936191}}
}

\bib{VertexCuts}{article}{
    author = {M.~Dunwoody and B.~Krön},
    year = {2014},
    pages = {136–-171},
    title = {{Vertex Cuts}},
    volume = {80},
    journal = {Journal of Graph Theory},
    doi = {10.1002/jgt.21844},
    number={2},
    review={\MR{3385727}}
}


\bib{elbracht2020canonical}{article}{
      title={A canonical tree-of-tangles theorem for submodular separation systems}, 
      author={C.~Elbracht and J.~Kneip},
      year={2020},
      eprint={2009.02091},
      note={To appear in Combinatorial Theory}
}

\bib{FiniteSplinters}{article}{
    author = {{C. Elbracht, J. Kneip and M. Teegen}},
	eprint = {1909.09030},
	title = {Trees of tangles in abstract separation systems},
	year = {2021},
	journal = {J.~Combin.\ Theory (Series~A)},
	doi = {10.1016/j.jcta.2021.105425},
	pages = {105425},
	volume = {180}
}

\bib{InfiniteSplinters}{article}{
	author = {{C. Elbracht, J. Kneip and M. Teegen}},
	eprint = {2005.12122},
	title = {Trees of tangles in infinite separation systems},
	year = {2020}
}

\bib{elm2020treeoftangles}{article}{
      title={A tree-of-tangles theorem for infinite tangles}, 
      author={A.K.~Elm and J.~Kurkofka},
      year={2020},
      eprint={2003.02535}
}
	
\bib{Refining}{article}{
    author = {J.~Erde},
    title = {{Refining a Tree-Decomposition which Distinguishes Tangles}},
    journal = {SIAM Journal on Discrete Mathematics},
    volume = {31},
    number = {3},
    pages = {1529--1551},
    year = {2017},
    doi = {10.1137/16M1059539},
    eprint={1512.02499}
}
	
\bib{Matroids}{article}{
    title = {Obstructions to branch-decomposition of matroids},
    journal = {J.~Combin.\ Theory (Series~B)},
    volume = {96},
    number = {4},
    pages = {560--570},
    year = {2006},
    doi = {10.1016/j.jctb.2005.11.001},
    author = {{J.~Geelen, B.~Gerards, N.~Robertson and G.~Whittle}},
    review={\MR{2232391}}
}

\bib{GeelenGridMatroid}{article}{
    title = {Tangles, tree-decompositions and grids in matroids},
    journal = {J.~Combin.\ Theory (Series~B)},
    volume = {99},
    number = {4},
    pages = {657--667},
    year = {2009},
    doi = {10.1016/j.jctb.2007.10.008},
    author = {{J.~Geelen, B.~Gerards and G.~Whittle}},
}
	
\bib{Kneip}{article}{
    title = {Representations of Infinite Tree Sets},
    author = {J.P.~Gollin and J.~Kneip},
    journal = {Order},
    volume = {38},
    pages = {79–96},
    year = {2021},
    doi = {10.1007/s11083-020-09529-0},
    eprint = {1908.10327}
}

\bib{Grohe}{inproceedings}{
  author =	{M.~Grohe},
  title =	{{Quasi-4-Connected Components}},
  booktitle =	{43rd International Colloquium on Automata, Languages, and Programming (ICALP 2016)},
  pages =	{8:1--8:13},
  series =	{Leibniz International Proceedings in Informatics (LIPIcs)},
  ISBN =	{978-3-95977-013-2},
  year =	{2016},
  volume =	{55},
  editor =	{Ioannis Chatzigiannakis and Michael Mitzenmacher and Yuval Rabani and Davide Sangiorgi},
  publisher =	{Schloss Dagstuhl--Leibniz-Zentrum fuer Informatik},
  address =	{Dagstuhl, Germany},
  doi =		{10.4230/LIPIcs.ICALP.2016.8},
  eprint={1602.04505}
}
	
\bib{HalinTreePartition}{article}{
    title = {Tree-partitions of infinite graphs},
    journal = {Disc.\ Math.},
    volume = {97},
    number = {1},
    pages = {203--217},
    year = {1991},
    issn = {0012-365X},
    doi = {10.1016/0012-365X(91)90436-6},
    author = {R.~Halin},
    review={\MR{1140802}}
}

\bib{Valentin}{thesis}{
    title={{Kantenblöcke und Strukturbäume}},
    type={Bachelor's Thesis},
    author={V.~Hofmaier},
    year={2016},
    school={Universität Hamburg}
}

\bib{TypicalInfinitelyEdgeconnectedGraphs}{article}{
    title={Every infinitely edge-connected graph contains the Farey graph or $T_{\aleph_0}\!\ast t$ as a minor},
    author={J.~Kurkofka},
    year={2020},
    eprint={2004.06710}
}

\bib{GMX}{article}{
    title = {{Graph minors. X.~Obstructions to tree-decomposition}},
    journal = {J.~Combin.\ Theory (Series~B)},
    volume = {52},
    number = {2},
    pages = {153--190},
    year = {1991},
    doi = {10.1016/0095-8956(91)90061-N},
    author = {N.~Robertson and P.D.~Seymour},
    review={\MR{1110468}}
}
	
\bib{Seese}{inproceedings}{
    doi = {10.1007/BFb0028825},
    author = {D.~Seese},
    editor = {Budach, Lothar},
    title = {Tree-partite graphs and the complexity of algorithms},
    booktitle = {Fundamentals of Computation Theory},
    year = {1985},
    publisher = {Springer Berlin Heidelberg},
    pages = {412--421},
    isbn = {978-3-540-39636-9}
}

\bib{Tutte}{book}{
    title={Graph~Theory},
    author={W.~Tutte},
    year={1984},
    publisher={Addison-Wesley}
}

\bib{Wollan}{article}{
    title = {The structure of graphs not admitting a fixed immersion},
    journal = {J.~Combin. Theory (Series~B)},
    volume = {110},
    pages = {47--66},
    year = {2015},
    doi = {10.1016/j.jctb.2014.07.003},
    author = {P.~Wollan},
}

\end{biblist}
\end{bibdiv}
\end{document}